\newcommand{\Be}{\begin{equation}}
\newcommand{\Ee}{\end{equation}}
\newcommand{\Bea}{\begin{eqnarray}}
\newcommand{\Eea}{\end{eqnarray}}
\newcommand{\Bel}{\begin{align}}
\newcommand{\Eel}{\end{align}}
\newcommand{\Beas}{\begin{eqnarray*}}
	\newcommand{\Eeas}{\end{eqnarray*}}
\newcommand{\Benu}{\begin{enumerate}}
	\newcommand{\Eenu}{\end{enumerate}}
\newcommand{\Bi}{\begin{itemize}}
	\newcommand{\Ei}{\end{itemize}}
\numberwithin{equation}{section}
\newcommand{\supp} {\text{supp\! }}
\newcommand{\dist} {\text{dist\! }}
\theoremstyle{plain}
\newtheorem{thm}{Theorem}[section]
\newtheorem{lem}[thm]{Lemma}
\newtheorem{prop}[thm]{Proposition}
\theoremstyle{remark}
\newtheorem{rmk}{Remark}
\theoremstyle{definition}
\newtheorem{defn}{Definition}[section]
\newcommand{\Angle}{\text{Angle}}
\DeclareMathOperator{\interior}{int}
\begin{document}

\title[Convergence of Fractional Schr\"odinger Equation]
{Pointwise convergence of the \\ fractional Schr\"odinger equation in $\mathbb R^2$}

\author{Chu-hee Cho}
\author{Hyerim Ko}

\address{Department of Mathematical Sciences and RIM, Seoul National University, Seoul 151--747, Republic of Korea}
\email{akilus@snu.ac.kr} 
\email{kohr@snu.ac.kr}

\thanks{This work was supported by 
NRF grant no. 2021R1A2B5B02001786,
 2020R1I1A1A01072942 (C. Cho)
 2017R1D1A1A02019547, 
and 2019R1A6A3A01092525 (H. Ko)
(Republic of Korea).}
\subjclass[2010]{35Q41.}

\keywords{pointwise convergence, fractional Schr\"odinger equation}

\begin{abstract} 
We investigate the pointwise convergence of the solution to the fractional Schr\"odinger equation in $\mathbb R^2$. By establishing $H^s(\mathbb R^2)-L^3(\mathbb R^2)$ estimates for the associated maximal operator
provided that $s>1/3$, we improve the previous result obtained by Miao, Yang, and Zheng \cite{MYZ}.
Our estimates extend the refined Strichartz estimates obtained by Du, Guth, and Li \cite{DGL} to a general class of elliptic functions. 
\end{abstract}

\maketitle

\section{Introduction}
For $\alpha>1$, we consider the fractional Schr\"odinger equation 
\begin{align}\label{fraceq}
i\partial_t u+(-\Delta)^{\alpha/2}=0, \qquad u(x,0)=f(x)
\end{align}
for $f \in H^s(\mathbb R^2)$. Here, $H^s$ is the $L^2$ Sobolev space of order $s$. Formally, the solution of \eqref{fraceq} can be written as
\begin{align*}
U_\alpha f(x,t)= (2\pi)^{-2}\int_{\mathbb R^2} 
e^{i(x\cdot \xi + t|\xi|^\alpha)} \widehat{f}(\xi) d\xi.
\end{align*}
In this study, we investigate the order of  $s$ for which
\begin{equation}\label{convs}
\lim_{t \rightarrow 0}
U_\alpha f (x,t) = f(x) \quad  \text{a.e.}\,\, x
\end{equation}
holds whenever $f \in H^s(\mathbb R^2)$.

The problem of determining the optimal regularity $s$ for which \eqref{convs} holds for the Schr\"odinger equation was initially studied by Carleson \cite{C}. When $d=1$, he proved the convergence of \eqref{convs} with $\alpha=2$ for $s\ge1/4$, whereas it generally fails for $s<1/4$ in any dimension, as shown by Dahlberg and Kenig \cite{DK}.

In higher dimensions, Sj\"olin \cite{S} and Vega \cite{V} independently showed that \eqref{convs} with $\alpha=2$ holds for $s> 1/2$. This result was improved to $s>1/2-1/(4d)$ by Lee \cite{L} for $d=2$ and  by Bourgain \cite{B} for $d\ge 3$.
Subsequently, Bourgain \cite{B3} showed that $s \ge d /(2d+2)$ is necessary for the almost everywhere convergence. The sufficiency part of the convergence was shown by Du, Guth, and Li \cite{DGL} when $d=2$ and by Du and Zhang \cite{DZ} when $d\ge 3$ for a sharp range
except for the endpoint (see \cite{Carbery, Cowling, B0, MVV, TV, LR1, LR2, DGLZ} for previous work).

For the fractional Schr\"odinger operator ($\alpha>1$), Sj\"olin \cite{S} proved that \eqref{convs} holds if and only if $s \ge 1/4$ when $d=1$. He also obtained some positive results in higher dimensions: \eqref{convs} is valid for $s \ge 1/2$ when $d=2$ and for $s > 1/2$ when $d\ge3$. 
Subsequently, this result was improved by Miao, Yang, and Zheng \cite{MYZ} to $s> 3/8$ when $d=2$ and  $s>s_0$ for some $s_0<1/2$ when $d\ge3$. We extend the result for $d=2$.

\begin{thm}\label{point3}
Let $\alpha>1$. Then, \eqref{convs} holds for $ f \in H^s(\mathbb R^2)$ whenever $s>1/3$.
\end{thm}

The result in Theorem \ref{point3} extends to the solution of the linear dispersive equation  
\[
i u_t -\Phi(D) u=0, \qquad u(x,0) = f(x).
\]
Here, $\Phi(D)$ is a multiplier operator defined on $\mathbb R^2$, where $\Phi$ is a smooth function except for the origin and satisfies the following property: for $\alpha>1$, there is a constant $C\ge1$ such that $ |\nabla \Phi(\xi)| \ge C^{-1} |\xi|^{\alpha-1}$ and $|\partial_\xi^\gamma \Phi(\xi)| \le C |\xi|^{\alpha-|\gamma|}$ for any multi-indices $\gamma$.
See Remark \ref{disp}.

We denote by $B^d(x,r)$ a ball of radius $r$ centered at $x$ in $\mathbb R^d$. Theorem \ref{point3} follows from the maximal estimate.
\begin{thm}\label{point0}
Let $\alpha>1$. Then, for $s>1/3$, there exists a constant $C>0$ such that
\begin{equation*}\label{point5}
\big\|  U_\alpha f\big\|_{L_x^3L_t^\infty (B^2(0,1)\times [0,1])}
\le C \|f\|_{H^s(\mathbb R^2)}.
\end{equation*}
\end{thm}

The proof of Theorem \ref{point0} is motivated by the argument used in \cite{DGL} and proceeds by
using polynomial partitioning to decompose $U_\alpha f$ into cells as well as transversal and tangential parts of a wall. The first two parts are easy to handle by induction, whereas the tangential term is much more complicated. 
To treat the tangential part, we need to prove refined Strichartz estimate  for $U_\alpha$.
We prove the estimate by using the decoupling inequality for elliptic parabola and induction on scales via rescaling. 
In contrast to the Schr\"odinger operator, $U_\alpha f$ $(\alpha \neq2)$ does not preserve the form after parabolic rescaling. To circumvent this issue, we consider a class of general elliptic functions  as in \cite{G}. 
Thus, we obtain the refined Strichartz estimates for a general class of operators (see Proposition 3.4).

\subsubsection*{Structure of the paper}
The remainder of this paper is organized as follows. By applying polynomial partitioning, we reduce the problem to a problem of proving bilinear tangential estimate (Theorem \ref{8.1}). Section \ref{sec3} establishes the linear refined Strichartz estimates (Proposition \ref{linear}) and bilinear refined Strichartz estimates (Proposition \ref{bilinear}). Accordingly, we prove Theorem \ref{8.1}.

\subsubsection*{Notation}
Throughout the paper,  $\mathcal F(f)$ denotes the Fourier transform of $f$. Further, $A \lesssim B$ denotes $A \le C B$ for some constant $C>0$ and $\#\mathcal D$ denotes the cardinality of a set $\mathcal D$.

\section{Proof of Theorem \ref{point0}}\label{sec2}
Let $\alpha>1$ and set $\mathbb A_r$ be the annulus given by
\[
\mathbb A_r :=\{ \xi \in \mathbb R^2 : 2^{-1}r \le |\xi | \le 2r \}.
\] 
To prove Theorem \ref{point0}, by the Littlewood--Paley decomposition and the triangle inequality, it suffices to show that for any $\epsilon>0$, there is $C_\epsilon>0$ such that
\begin{equation*}\label{point6}
\big\| \sup_{0 <t \le 1 } |U_\alpha f| \big\|_{L^3(B^2(0,1))}
\le C_\epsilon R^{\frac13+\epsilon} \|f\|_2,
\end{equation*}
provided that $\widehat f$ is supported on $\mathbb A_R$ for $R \ge 1$. By a parabolic rescaling $\xi \rightarrow R\xi$ and $(x,t) \rightarrow (R^{-1}x, R^{-\alpha} t)$, the estimate is reduced to showing that
\begin{equation}\label{point7}
\big\| \sup_{0 <t \le R^\alpha } |U_\alpha f|\big\|_{L^3(B^2(0,R))}
\le C_\epsilon R^{\epsilon} \|f\|_2,
\end{equation}
whenever $\widehat f$ is supported on $\mathbb A_1$.
Now we reduce the matter to showing \eqref{point7} in which the supremum is taken over a smaller interval
$[0,R]$ instead of $[0,R^{\alpha}]$.
More precisely, to prove \eqref{point7} it suffices to show that
for any $\epsilon>0$, there exists a constant $C_\epsilon > 0$ such that
 \begin{equation*}
\| \sup_{0<t\le R} |U_\alpha f| \|_{L^3(B^2(0,R))} \le C_\epsilon R^{\epsilon} \|f\|_2
\end{equation*}
whenever $\widehat{f}$ is supported on $\mathbb A_1$. This reduction can be obtained by applying the time localization lemma in \cite[Lemma 2.11]{MYZ} for the fractional Schr\"odinger operator
(see also \cite{L}). Alternatively, one may verify the lemma by using $TT^*$ argument as in \cite[Lemma 2.1]{CLV}.
After finite decomposition, we may assume that $\widehat f$ is supported on a ball $B^2(\xi_0,r) \subset \mathbb A_1$. Hence, Theorem \ref{point0} is a consequence of the following.
For simplicity, let $B_R=B^2(0,R)\times [0,R]$.
\begin{thm}\label{main result}
Let $p\ge3$ and $R\ge1$. Then, for any $\epsilon > 0$, $q > \epsilon^{-4}$, and $r \le 1$ such that $B(\xi_0,r) \subset \mathbb A_1$,  there exists a constant $C_\epsilon > 0$ such that
\begin{equation}\label{main ineq}
\| U_\alpha f \|_{L_x^pL_t^q(B_R)}
\le C_\epsilon r^{\epsilon^2}R^{\epsilon}\|f\|_2
\end{equation}
whenever $\widehat{f}$ is supported on $B(\xi_0,r)$.
\end{thm}

Indeed, by the dominated convergence theorem, the estimate \eqref{main ineq} implies that
\[
\| U_\alpha f\|_{L_x^pL_t^\infty(B_R)} \le C_\epsilon R^\epsilon \|f\|_2
\]
for any $p>3$. By interpolating this with a trivial estimate $\| U_\alpha f \|_{L_x^2L_t^\infty(B_R)} \lesssim R^{1/2}\|f\|_2$, we have Theorem \ref{point0}.

We begin by stating a wave packet decomposition of $U_\alpha f$ (see, for example, \cite{Tao, DGL}).
For later use, we state the following for a more general operator $e^{it\Phi}f$ defined by
\[
e^{it\Phi}f(x,t) = \int e^{i(x\cdot \xi+t \Phi(\xi))} \widehat f(\xi)\,d\xi
\]
where $\Phi$ is smooth and the Hessian matrix of $\Phi$ is nondegenerate.
Let $\psi$ be a smooth function such that $\widehat \psi$ is supported on $B^2(0,3/2)$ and $\sum_{k\in \mathbb Z^2} |\widehat\psi(\cdot-k)|^2 =(2\pi)^{-2}$ on $\mathbb R^2$.
For $\delta>0$ and $(y,\nu) \in [R^{1/2}\mathbb Z^2 \cap B^2(0,R)] \times [R^{-1/2}\mathbb Z^2 \cap B^2(0,2)]$, 
we define a tube $T=T_{y,v}$  by
\begin{align}\label{tube}
T = \{ (x,t) \in \mathbb R^3 : |x-y+ t\nabla\Phi(v)| \le R^{1/2+\delta},\,0 \le t \le R\},
\end{align}
and denote the direction of tube by $D(T)=(-\nabla \Phi(v), 1)$ and the set of all tubes $T$ by $\mathcal T$. 
We define $\psi_T=\psi_{T_{y,v}}$ by  
\[\widehat{\psi_T}(\xi)=e^{-iy\cdot \xi}R^{1/2}\widehat\psi(R^{1/2}(\xi-v))\]
so that $\sum_{T \in \mathcal T} \psi_T(x) \overline {\mathcal F(\psi_T)(\xi)}=(2\pi)^{-2}e^{ix\cdot \xi}$ by the Poisson summation formula
(see for example \cite{Gr}).

\begin{lem} \label{wave packet}
Let $\Phi$, $T$ and $\psi_T$ be as above. Suppose $\widehat f$ is supported on the  ball $B^2(0,1)$. By setting
$f_T=\langle f, \psi_T \rangle \psi_T,$
we have
\begin{align*}\label{wavepacket}
f= \sum_{T \in \mathcal T} f_T
\end{align*}
such that
\[
\sum_{T \in \mathcal T} |\langle f, \psi_T \rangle|^2 \lesssim \|f\|_2^2
\]
and for sufficiently large $N\ge1$ and $(x,t) \in B^3(0,R)$,
\[
|e^{it\Phi} \psi_T(x,t)| \lesssim R^{-1/2} \chi_{T}(x,t) +O(R^{-N}) \|f\|_2.
\]
\end{lem}

Let $\epsilon > 0$ and $0 < r \le 1 \le R$. Suppose that the support of $\widehat f$ is contained in $B^2(\xi_0,r)$ for some $\xi_0 \in \mathbb A_1$. The proof proceeds by induction on the size of $r$ and $R$. 
Note that \eqref{main ineq} holds trivially for $R \sim 1$; hence, it suffices to consider  $R \gg 1$.
Furthermore, we only need to consider $r \ge R^{-1/2}$. In fact, if $r \le R^{-10}$, then \eqref{main ineq} follows trivially since $| U_\alpha f(x,t)| \le r\|f\|_2$ by H\"older's inequality. On the other hand, if $R^{-10} \le r \le R^{-\frac12}$, then all the wave packets have the same direction. Therefore, we apply H\"older's inequality and obtain
\begin{equation}
\begin{aligned}\label{one} 
\big\| \sum_{T} U_\alpha f_{T} \big\|_{L_x^pL_t^q(B_R)}
	&\lesssim
R^{-1/2+1/q} \| \sum_{T} \langle f ,\psi_T\rangle  \chi_{T} \|_{L_x^pL_t^\infty(B_R)}  \\ 
	&\lesssim
R^{-1/2+1/q} \sum_{T} | \langle f , \psi_T \rangle |  \| \sup_{T} \chi_{T} \|_{L_x^pL_t^\infty(B_R)}  \\ 
	&\lesssim
R^{-1/2+1/q} R^{(3/2+\delta)/p} \|f\|_2.
\end{aligned}
\end{equation}
For the last inequality, we use \eqref{tube}. Thus, \eqref{main ineq} follows for $p\ge 3$ and sufficiently large $q>\epsilon^{-4}$ with small $\delta=\delta(\epsilon)$.
Hereafter, we only consider $r\ge R^{-1/2}$.
Now, we may assume that $\eqref{main ineq}$ holds 
if the radius of balls in physical space is less than $R/2$ or the radius of balls in physical space is less than $R$ and that of balls in frequency space is less than $r/2$. Then, it suffices to show \eqref{main ineq} for $R \gg 1$ and
$r\ge R^{-1/2}$.

Now, we reduce the matter to showing the \text{\it bilinear tangential estimate} (Theorem \ref{8.1})
by a standard argument using polynomial partitioning. Let us denote by $Z(P)$ the zero set of a polynomial $P$. We say that $P$ is a nonsingular polynomial if $\nabla P(z) \neq 0$ for all $z \in Z(P)$. Throughout this paper, we may assume that the polynomial $P$ is a product of nonsingular polynomials by the density argument (see \cite{G}). We recall the polynomial partitioning in \cite{DGL}.

\begin{thm} \label{polynomial}
Let $g \in L_x^1L_t^s(\mathbb R^{d+1})$ be a nonzero function, $1 \le s < \infty$, and $D>0$. 
Then, there exists a nonzero polynomial $P$  defined on $\mathbb R^{d+1}$ of degree $\le D$, which is a product of distinct nonsingular polynomials, and there exists a collection of disjoint open sets $\{O_i\}_{i \in \mathcal I}$  such that $\# \mathcal I  \sim D^{d+1}$ and
\begin{equation*}\label{decomp}
(\mathbb R^d \times \mathbb R)\setminus Z(P) = \bigcup_{i \in \mathcal I}O_i.
\end{equation*}
Moreover, there exists a constant $C_1$ independent of $i$ such that
\[	
\|  g \|_{L_x^1L_t^s(\mathbb R^{d+1})} \le C_1 D^{d+1} \| \chi_{O_i} g \|_{L_x^1L_t^s(\mathbb R^{d+1})}
\]
for each $i \in \mathcal I$.
\end{thm}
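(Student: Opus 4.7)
The plan is to adapt Guth's iterated polynomial ham sandwich argument to the mixed norm $L_x^1 L_t^s$. Recall the classical polynomial ham sandwich theorem of Stone--Tukey: given $N$ finite absolutely continuous measures on $\mathbb R^{d+1}$, there exists a nonzero polynomial of degree $\lesssim N^{1/(d+1)}$ whose zero set simultaneously bisects all of them. The overall strategy is to iterate this $k_0 \sim (d+1)\log_2 D$ times, doubling the number of cells at each step, and to take the partitioning polynomial $P$ to be the product of the polynomials used at each step.

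To fit the mixed norm into the ham sandwich framework, I would introduce, for each open set $U \subset \mathbb R^{d+1}$, the auxiliary nonnegative density
\[
F_U(x,t) = |g(x,t)|^s\, h_U(x)^{1-s}\chi_U(x,t), \qquad h_U(x) = \Big(\int |g(x,\tau)|^s \chi_U(x,\tau)\, d\tau\Big)^{1/s},
\]
with the convention that the integrand vanishes wherever $h_U(x)=0$. A short calculation yields the identity $\int F_U = \|g\chi_U\|_{L_x^1 L_t^s}$, so applying ham sandwich to the densities $F_{U_j}$ of the current cells bisects each mixed norm exactly. At step $k$, with $2^k$ active cells, the ham sandwich polynomial $Q_k$ has degree $\lesssim 2^{k/(d+1)}$; the total degree of $P = \prod_{k<k_0} Q_k$ is then bounded by $\sum_{k<k_0} 2^{k/(d+1)} \lesssim D$, and the cells $O_i$ are taken to be the connected components of $\mathbb R^{d+1}\setminus Z(P)$, whose number is $\sim D^{d+1}$.

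The main obstacle is to turn the identity $\int_{U_j^\pm} F_{U_j} = \tfrac12 \int F_{U_j}$ delivered by ham sandwich into a genuine lower bound on the child cell's mixed norm, since $F_U$ depends nonlinearly on $U$ through $h_U$. Here the hypothesis $s\ge 1$ enters crucially: as $U_j^\pm \subset U_j$, one has $h_{U_j^\pm}\le h_{U_j}$, hence $h_{U_j^\pm}^{1-s}\ge h_{U_j}^{1-s}$, and therefore
\[
\|g\chi_{U_j^\pm}\|_{L_x^1 L_t^s} = \int F_{U_j^\pm} \;\ge\; \int_{U_j^\pm} F_{U_j} \;=\; \tfrac12 \int F_{U_j} \;=\; \tfrac12 \|g\chi_{U_j}\|_{L_x^1 L_t^s}.
\]
Iterating this halving inequality over all $k_0$ steps gives $\|g\chi_{O_i}\|_{L_x^1 L_t^s} \gtrsim D^{-(d+1)} \|g\|_{L_x^1 L_t^s}$ for every final cell, which is the claimed bound. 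Finally, the density perturbation argument of Guth allows one to assume that $P$ is a product of distinct non-singular polynomials without affecting the cell count or mass bounds, completing the proof sketch.
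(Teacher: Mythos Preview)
The paper does not supply its own proof of this theorem; it is quoted verbatim as Theorem~3.2 of \cite{DGL} and used as a black box. Your sketch is correct and is essentially the argument given in \cite{DGL}: iterate Stone--Tukey ham sandwich on the auxiliary densities $F_U = |g|^s h_U^{1-s}\chi_U$, use the monotonicity $h_{U^\pm}\le h_U$ together with $s\ge 1$ to propagate the halving to the genuine mixed norm, sum the geometric series of degrees, and invoke Guth's perturbation argument for non-singularity. There is nothing to compare against in the present paper beyond the citation.
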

By taking $s = q/p$, $D = R^{\epsilon^4}$, and $g=\chi_{B_R}  |U_\alpha f|^p$,
and applying Theorem \ref{polynomial}, we have
\begin{equation}\label{d3}
\|   U_\alpha f \|_{L_x^pL_t^q(B_R)}^p
	\le C_1 D^{3} \|  \raisebox{.3ex}{$\chi_{O_i}$}  U_\alpha f \|_{L_x^pL_t^q(B_R)}^p.
\end{equation}
We denote by $W$ a wall that is an $R^{\frac12 + \delta}$-neighborhood of  $Z(P)$
and a cell $\widetilde{O}_i = O_i \setminus W$. It is clear that
\begin{equation*}\label{Lp norm}
\|  U_\alpha f \|_{L_x^pL_t^q(B_R)}^p 
	= \sum_{i \in \mathcal I}\|  \raisebox{.5ex}{$\chi_{\widetilde{O}_i}$} U_\alpha f \|_{L_x^pL_t^q(B_R)}^p 
		+ \| \raisebox{.2ex}{$\chi_{W}$} U_\alpha f  \|_{L_x^pL_t^q(B_R)}^p.
\end{equation*}
If the terms $\sum_{i \in \mathcal I}\|  \raisebox{.5ex}{$\chi_{\widetilde{O}_i}$} U_\alpha f \|_{L_x^pL_t^q(B_R)}^p $ dominate the other in the above-mentioned equation, the estimate is easy to handle.
Hence, we consider this case first.

\noindent{\it Cellular part}.
Let us consider a subcollection $\widetilde {\mathcal I}$ of an index set $\mathcal I$ such that
\[
\widetilde{\mathcal I} = \big\{ i \in \mathcal I: \|   U_\alpha f \|_{L_x^pL_t^q(B_R)}^p 
	\le 2C_1 D^{3}  \| \raisebox{.5ex}{$\chi_{\widetilde{O}_i}$} U_\alpha f   \|_{L_x^pL_t^q(B_R)}^p \big\}
\]
where the constant $C_1$ is given by \eqref{d3}. 
To treat the case in which the cellular part dominates the walls, we may assume that 
 $\widetilde{\mathcal I}=\mathcal I$.
For each $i \in \mathcal I$, we set
\[
f_i = \sum_{T: T \cap \widetilde{O}_i \neq \emptyset} f_T.
\]
By Lemma \ref{wave packet}, 
if  $(x,t)\in \widetilde O_i$, then $ |U_\alpha f (x,t)| \le  |U_\alpha f_i(x,t)| + O(R^{-N})\|f\|_2 $ for sufficiently large $N$. 
Since each tube $T$ intersects at most $(D+1)$ cells $O_i$, we have
\[
\sum_{i \in \mathcal I}\|f_i\|_2^2 
	=\sum_{\substack{i \in \mathcal I, 
T \in \mathcal T : \\ T \cap \widetilde O_i \neq \emptyset}}\|f_T\|_{2}^2 \lesssim D\|f\|_{2}^2.
\]
Since $\# \mathcal I \sim D^3$, by pigeonholing, there exists an index $i_\circ \in \mathcal I$  such that $\|f_{i_\circ}\|_2^2 \lesssim D^{-2}\|f\|_{2}^2$.
We cover $B_R$ by $\{B_{R/2}'\}$, which are translations of $B_{R/2}$, and obtain
\begin{align*}
\| U_\alpha f \|_{L_x^pL_t^q(B_R)}^p
	&\le 2C_1D^3\sum_{B_{R/2}'}\| \sum_{T\,:\,T \cap \widetilde O_{i_\circ} \neq \emptyset} U_\alpha f_T \|_{L_x^pL_t^q(B_{R/2}')}^p 
	+ O(R^{-N})\|f\|_2^p.
\end{align*}
By applying the induction hypothesis, it follows that
\begin{align*} 
\| U_\alpha f \|_{L_x^pL_t^q(B_R)}^p 
	&\le CD^3 \big[C_\epsilon r^{\epsilon^2}(R/2)^{\epsilon}\|f_{i_\circ}\|_2 \big]^p + O(R^{-N})\|f\|_2^p \\ 
	&\le 2C2^{-p\epsilon} D^3 \big[C_\epsilon r^{\epsilon^2}R^{\epsilon} D^{-1}\|f\|_2 \big]^p. 
\end{align*}
Since $p>3$ and $D=R^{\epsilon^4}$, we see that $2C2^{-p\epsilon}D^{3-p} \le 2^{-p} $ for sufficiently large $R$.
Therefore, we get \eqref{main ineq} when the cell part dominates.

Now, we consider the opposite case in which the wall part dominates.
To this end, we present some definitions. We denote by $T_z(Z(P))$ the tangent plane of $Z(P)$ at a fixed point $z$. 
Let us partition $B_R$ into balls $B_j$ of radius $R^{1-\delta}$.
We say that a tube $T$ is {\it tangent} to the wall $W$ in $B_j$ if $T$ intersects $B_j$ and $W$ and satisfies
\[
\Angle \big( D(T),T_z(Z(P) \big) \le R^{-\frac12 + 2\delta}
\]
for any nonsingular point $z \in Z(P) \cap 10T \cap 2B_j$.
Otherwise, we say that $T$ is {\it transversal} to the wall $W$ in $B_j$. 
Let $\mathcal T_{j,\mathrm{tang}}$ be the collection of all tubes $T \in \mathcal T$ such that $T$ is tangent to the wall in $B_j$ and let $\mathcal T_{j,\mathrm{trans}}$ be the collection of tubes such that $T$ is transversal.
We also set
\[
f_{j,\mathrm{tang}} = \sum_{T \in \mathcal T_{j,\mathrm{tang}}}f_T
\quad \mbox{ and } \quad 
f_{j,\mathrm{trans}}= \sum_{T \in \mathcal T_{j,\mathrm{trans}}}f_T.
\]

For a given $\delta'>0$, we say that a tube $T$ is $R^{-1/2+\delta'}$-tangent to $Z$ if it satisfies
\begin{equation}\label{tangentZ}
T \subset N_{R^{1/2+\delta'}}Z \cap B_R, \quad
\Angle \big( D(T),T_zZ(P)\big)
\le R^{-1/2+\delta'}
\end{equation}
for all nonsingular points $z\in N_{2R^{1/2+\delta'}}(T)
\cap 2B_R \cap Z$. The collection of tubes that are $R^{-1/2+\delta'}$-tangent to $Z$ 
is denoted by $T_Z(R^{-1/2+\delta'})$. We say that $f$ is concentrated on wave packets from $T_Z(R^{-1/2+\delta'})$ if 
\[
\sum_{T \notin T_Z(R^{-1/2+\delta'})} \| f_T \|_2 =O( R^{-N}) \|f\|_2
\]
holds for sufficiently large $N>0$.

\noindent{\it Wall part.} 
Now we consider the case $\mathcal I \neq \widetilde {\mathcal I}$; hence, we can choose $i_\circ \in \mathcal I \setminus \widetilde{\mathcal I}$. From \eqref{d3}, 
\begin{align*}
\| U_\alpha f \|_{L_x^pL_t^q(B_R)}^p
	&\le C_1D^3
\| \raisebox{.5ex}{$\chi_{\widetilde O_{i_\circ}}$} U_\alpha f \|_{L_x^pL_t^q(B_R)}^p +
C_1D^3\| \raisebox{.3ex}{$\chi_{O_{i_\circ} \cap W}$}  U_\alpha f  \|_{L_x^pL_t^q(B_R)}^p.
\end{align*}
Since $i_\circ \in \mathcal I \setminus \widetilde{\mathcal I}$, we have
$C_1D^3\| \raisebox{.5ex}{$\chi_{\widetilde O_{i_\circ}}$} U_\alpha f \|_{L_x^pL_t^q(B_R)}^p\le 2^{-1}\| U_\alpha f \|_{L_x^pL_t^q(B_R)}^p$. Therefore,
\begin{align*}
\| U_\alpha f \|_{L_x^pL_t^q(B_R)}^p \le 2C_1D^3  \| \raisebox{.3ex}{$\chi_{W}$}  U_\alpha f \|_{L_x^pL_t^q(B_R)}^p. 
\end{align*}
Thus, it suffices to consider the wave packets concentrated on the wall.

Recalling that $ \supp \widehat f\subset B^2(\xi_0,r)$, for $1 \ll K \ll R^{\epsilon}$, we cover $B^2(\xi_0,r)$ by boundedly overlapping collection of balls $\omega$ of radius $K^{-1}r$ and let $f = \sum_{\omega} f_{\omega}$, where $\widehat{f_\omega}$ is supported on $\omega$. For each fixed $B_j$, we set 
\[
f_{\omega,j,\mathrm{tang}}=(f_\omega)_{j,\mathrm{tang}}, \qquad f_{\omega,j,\mathrm{trans}}=(f_\omega)_{j,\mathrm{trans}}.
\]
We define a bilinear tangential operator by
\[
Bil(U_\alpha f_{j,\mathrm{tang}})(x,t) = \sum_{\dist(\omega_1,\omega_2) \ge K^{-1}r} 
| U_\alpha f_{\omega_1,j,\mathrm{tang}}(x,t)|^{1/2} |U_\alpha f_{\omega_2,j,\mathrm{tang}}(x,t)|^{1/2}.  
\]

We set
\begin{align}\label{Ball}
B= \{ (x,t) \in B_R : K^{\epsilon^3} \max_\omega | U_\alpha f_\omega(x,t)|  \le |U_\alpha f(x,t)| \,\}
\end{align}
and for $(x,t)\in W\cap B$,
\[
\Omega= \{ \omega : | U_\alpha f_{\omega,j,\mathrm{tang}}(x,t)| \le K^{-4} | U_\alpha f(x,t) | \,\}.
\]
By fixing $B_j$ and $(x,t) \in B_j \cap W \cap B$, we first consider the case in which all balls $\omega$ in $\Omega^\mathsf{c}$ are adjacent. 
Hence, $\#\Omega^\mathsf{c} \lesssim 1$,
and it follows that $ \sum_{\omega \in \Omega^\mathsf{c}} | U_\alpha f_\omega(x,t)| \le \frac 12 | U_\alpha f (x,t)|$
by \eqref{Ball}.
Thus, we have $\frac 12 | U_\alpha f(x,t)|  \le |\sum_{\omega \in \Omega} U_\alpha f_\omega(x,t)| $. Then,
\begin{align*}
\frac 12 |U_\alpha f(x,t)|
	&\le \big|\sum_{\omega\in \Omega} U_\alpha f_{\omega,j,\mathrm{trans}}(x,t) \big| 
	+ \big|\sum_{\omega \in \Omega} U_\alpha f_{\omega,j,\mathrm{tang}}(x,t) \big| +O(R^{-N})\|f\|_2.
\end{align*}
Since the total number of $\omega$ is $\le 10K^2$, we get
\begin{align*}
\frac 12 |U_\alpha f(x,t)| 
	&\lesssim \big|\sum_{\omega\in \Omega} U_\alpha f_{\omega,j,\mathrm{trans}}(x,t) \big| + K^{-2} |U_\alpha f(x,t)|
+O(R^{-N})\|f\|_2.
\end{align*} 
Otherwise, for $(x,t)\in B_j \cap W \cap B$, there are $\omega_1,\omega_2 \in \Omega^\mathsf{c}$ such that dist$(\omega_1,\omega_2) \gtrsim K^{-1}r$. Then, by the definition of $\Omega$,
$| U_\alpha f(x,t)| \le K^{4} Bil( U_\alpha f_{j,\mathrm{tang}})(x,t)$. Therefore, we have the following.
\begin{lem}\label{decomposition}
For each point $(x,t) \in W \cap B_R$, 
there exists a collection $\Omega$ of balls $\omega$ of radius $K^{-1}r$ such that
\begin{align*}
| \raisebox{.3ex}{$\chi_W$} U_\alpha f(x,t)|^p 
	&\lesssim 
| \raisebox{.3ex}{$\chi_{W\cap B^\mathsf{c}} $}U_\alpha f(x,t) |^p +\sum_j \big|\sum_{\omega \in \Omega} 
\raisebox{.3ex}{$ \chi_{W \cap B_j}$} U_\alpha f_{\omega,j,\mathrm{trans}}(x,t) \big|^p\\
	&+ \sum_j K^{4p} \big| \raisebox{.3ex}{$\chi_{W\cap B_j}$} Bil( U_\alpha f _{j,\mathrm{tang}})(x,t)\big|^p + O(R^{-N})\|f\|_2^p.\nonumber
\end{align*}
\end{lem}
By Lemma \ref{decomposition}, we have
\begin{equation}
\begin{aligned}\label{wall123}
\| U_\alpha f \|_{L_x^pL_t^q(W\cap B_R)} ^p
	&\lesssim  
\big\| U_\alpha f \big\|_{L_x^pL_t^q(W\cap B^\mathsf{c})}^p
+\sum_j \big\|\sum_{\omega \in \Omega} U_\alpha f_{\omega,j,\mathrm{trans}} \big\|_{L_x^pL_t^q(W \cap B_j)}^p \\
	&+ 
\sum_j \big\| K^{4}Bil( U_\alpha f_{j,\mathrm{tang}}) \big\|_{L_x^pL_t^q(W\cap B_j)}^p
+ O(R^{-N})\|f\|_2^p.
\end{aligned}
\end{equation}
From \eqref{Ball}, the first term on the right-hand side of \eqref{wall123} is bounded by
\begin{equation*}
\|  U_\alpha f \|_{L_x^pL_t^q(W\cap B^\mathsf c)}^p 
	\le K^{\epsilon^3p} \sum_{\omega} \| U_\alpha f_\omega \|_{L_x^pL_t^q(B_R)}^p.
\end{equation*}
By applying the induction hypothesis \eqref{main ineq} to the right-hand side,  we have
\[
\|  U_\alpha f \|_{L_x^pL_t^q(W\cap B^\mathsf c)}^p 
\le
K^{\epsilon^3p}
\sum_{\omega} \big[ C_\epsilon (K^{-1}r)^{\epsilon^2}R^\epsilon\| f_\omega \|_2  \big]^p \le 10K^{(\epsilon^3-\epsilon^2)p} \big[ C_\epsilon r^{\epsilon^2} R^\epsilon \| f \|_2  \big]^p.
\]
Since $ K \ll R^{\epsilon}$ and $R \gg 1$, we have $10K^{\epsilon^3-\epsilon^2} \le 1/6$. This completes the induction step for the first term on the right-hand side of \eqref{wall123}.
In the remainder of this section, we treat the second and third terms on the right-hand side of \eqref{wall123}.

\noindent{\it Transversal case.} 
Now, we deal with the transversal term in \eqref{wall123}. In \cite{G}, it was shown that
for each tube $T \in \mathcal T$,
\begin{align}\label{algebraic}
\#\{j :T \in \mathcal T_{j,\mathrm{trans}}\} \le R^{O(\epsilon^4)}.
\end{align}	
 Since $\# \Omega \le 2^{10K^2}$, the second term on the right-hand side of \eqref{wall123} is controlled by
\begin{align}\label{maxJ}
\sum_j
\big\| \max_{\Omega} \big|\sum_{\omega \in \Omega} U_\alpha f_{\omega,j,\mathrm{trans}}\big| \big\|_{L_x^pL_t^q(W\cap B_j)}^p 
	\le \sum_j 2^{10K^2} \big\| \sum_{\omega \in \Omega} U_\alpha f_{\omega,j,\mathrm{trans}} \big\|_{L_x^pL_t^q(B_j)}^p.
\end{align}
By applying the induction hypothesis to a ball $B_j$ of radius $R^{1-\delta}$, the right-hand side of \eqref{maxJ} is bounded by $\sum_j 2^{10K^2} \big[ C_\epsilon r^{\epsilon^2}R^{(1-\delta)\epsilon} \| f_{j,trans}\|_2\big]^p$.
Therefore, by applying \eqref{algebraic}, we get
\begin{align*}
\sum_j \big\| \max_{\Omega} \big|\sum_{\omega \in \Omega} U_\alpha f_{\omega,j,\mathrm{trans}}\big| \big\|_{L_x^pL_t^q(W\cap B_j)}^p 
\le
R^{O(\epsilon^4)} 2^{10K^2} R^{-\delta\epsilon p} \big[ C_\epsilon r^{\epsilon^2}R^{\epsilon} \| f \|_2\big]^p.
\end{align*}
Since $K \ll R^\epsilon$, we take $\delta=\epsilon^2$ and obtain $2^{10K^2} R^{O(\epsilon^4)-\epsilon^3 p}\le 1/6$ for sufficiently large $R>0$. Therefore, the induction closes for the transversal term.

\noindent{\it Bilinear tangential case.}
To estimate the third term on the right-hand side of \eqref{wall123}, it remains to prove the following bilinear maximal estimates. 
\begin{thm}\label{BT}
For any $\epsilon>0$ and $p>3$, there exists $C_\epsilon>0$ such that	
\[ \Big( \int_{B_R} \sup_{t : (x,t) \in W\cap B_j} | Bil( U_\alpha f_{j,\mathrm{tang}})(x,t) |^p \,dx \Big)^{1/p}
	\le C_\epsilon R^{\epsilon/2} \|f\|_2.
\]
\end{thm}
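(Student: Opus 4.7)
\emph{Linearization of the supremum.} My first step is to replace the continuous supremum in $t$ by a discrete maximum. Since $\widehat{f}$ is supported in $B^2(\xi_0,r)$ with $|\xi_0|\sim 1$ and $|\Phi|\lesssim 1$ on this set, the map $t\mapsto e^{it\Phi}f(x)$ is Lipschitz in $t$ with constant $\lesssim \|f\|_2$, so sampling $t$ on a mesh of width $R^{-N}$ incurs only an $O(R^{-N})\|f\|_2$ error. The task thus reduces to an $L^p_xL^q_t$ bound on $W\cap B_j$ with $q$ finite but arbitrarily large, and the resulting $R^{O(1/q)}$ factor can be absorbed into $R^{\epsilon/2}$.

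\emph{Setting up the bilinear refined Strichartz.} By the definition of $\mathcal T_j^a$, every wave packet $T_{y,v}$ appearing in $f_j^a$ meets $B_j$ inside $N_{R^{1/2+\delta}}(Z(P))$ and has direction making an angle at most $R^{-1/2+2\delta}$ with $T_zZ(P)$ at every nearby non-singular point. After enlarging $\delta$ to a parameter $\delta'$, this is exactly the statement that $f_j^a$ is concentrated on wave packets from $T_Z(R^{-1/2+\delta'})$ in the sense of the definition given above. At the same time, the bilinear form pairs pieces $f_{\omega_1,j}^a, f_{\omega_2,j}^a$ with Fourier supports separated by at least $K^{-1}r$, which furnishes the bilinear transversality between the two factors.

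\emph{Invoking Proposition \ref{bilinearM}.} With these two properties available, I would apply Proposition \ref{bilinearM} (the refined bilinear Strichartz at different frequency scales) to each pair $(\omega_1,\omega_2)$ at the linearized time $t(x)$, then integrate in $x$. The hypotheses of that proposition match the present data precisely: frequency supports separated at scale $K^{-1}r$, tangency to a codimension-one variety on $B_j$, and a phase $\Phi$ belonging to the admissible class of elliptic functions. The resulting $L^p$ bound, combined with Cauchy--Schwarz and almost-orthogonality of the pieces $\{f_{\omega,j}^a\}_\omega$, produces the estimate $C_\epsilon R^{\epsilon/2}\|f\|_2$ claimed in Theorem \ref{BT}.

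\emph{Main obstacle.} The real content lies in Proposition \ref{bilinearM}; given it, Theorem \ref{BT} follows by the routine reductions above. The difficulty concentrated there is twofold. First, parabolic rescaling maps one elliptic phase to another, so the induction must be carried out uniformly over the whole class of phases satisfying \eqref{alpha}--\eqref{positive} rather than for a single $\Phi$, which explains the additional pigeonholing hypothesis the authors impose in Propositions \ref{linear} and \ref{multist}. Second, exploiting tangency to $Z(P)$ requires the $\ell^2$-decoupling inequality applied to the projection of the elliptic surface onto a hyperplane, which must itself be verified to be elliptic of lower dimension; this is the content of Theorem \ref{decoupling}. Once the linear refined Strichartz (Proposition \ref{linear}) is bootstrapped to its bilinear counterpart (Proposition \ref{bilinear}) and the frequency scales are adjusted by parabolic rescaling and a dyadic pigeonhole, Proposition \ref{bilinearM} is in hand, and the tangential case of the wall estimate closes.
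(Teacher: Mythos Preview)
Your proposal correctly identifies the key ingredients---tangency of $f_j^a$ to $Z(P)$, bilinear separation at scale $K^{-1}r$, and Proposition~\ref{bilinearM} as the engine---and your discussion of the difficulties inside Proposition~\ref{bilinearM} is accurate. But the ``routine reductions'' you sketch skip two substantive steps that the paper carries out explicitly, and your proposed mechanisms for them do not quite work.

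First, the wave packets in $f_j^a$ live at scale $R$ (width $R^{1/2+\delta}$), whereas the estimate is on $B_j$ of radius $R_1=R^{1-\delta}$; the refined Strichartz machinery needs the input concentrated on $T_Z(R_1^{-1/2+\delta'})$ at the \emph{smaller} scale. Simply ``enlarging $\delta$ to $\delta'$'' does not do this: one must re-decompose at scale $R_1$ and check the new tubes inherit tangency. The paper handles this via the coordinate change~\eqref{xta}--\eqref{HH} and the comparison Lemmas~\ref{32} and the one after it (from~\cite{G2}), which is the whole content of ``Theorem~\ref{8.1} $\Rightarrow$ Theorem~\ref{BT}.'' Second, your plan to ``apply Proposition~\ref{bilinearM} at the linearized time $t(x)$ and integrate in $x$'' does not match that proposition's hypotheses: it requires the constancy condition~\eqref{constM} on a family of $R^{1/2}$-cubes and outputs an $L^6_{x,t}$ bound with gain $M^{-1/6}$. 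Converting this to an $L^p_xL^\infty_t$ maximal estimate is done in Section~3.3 by a level-set argument: define $X_\lambda$, use that $e^{it\Phi}f\cdot e^{it\Phi}g$ is essentially constant on $r^{-1}$-cubes (frequency support has radius $r$), pass to $S_\lambda$, pigeonhole to manufacture the constancy hypothesis~\eqref{qh1h2}, and balance the $M^{-1/6}$ gain from~\eqref{BM} against the counting bound~\eqref{xlambda}. Your Lipschitz discretization of $t$ is a different device and does not by itself produce the cube-wise constancy that Proposition~\ref{bilinearM} needs.
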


Indeed, assuming Theorem \ref{BT}, by H\"older's inequality, it follows that for $q >\epsilon^{-4}$,
\begin{align*}
\sum_jK^{4p}\|  Bil( U_\alpha f_{j,\mathrm{tang}}) \|_{L_x^pL_t^q(W \cap B_j )}^p
	&\le \sum_jK^{4p}R^{\epsilon^4p} \|  Bil( U_\alpha f_{j,\mathrm{tang}}) \|_{L_x^pL_t^\infty(W \cap B_j)}^p\\
	&\le R^{3\delta} K^{4p} R^{\epsilon^4p} [C_\epsilon R^{\epsilon/2} \|f\|_2]^p
\end{align*}
since the number of $j$ is $\lesssim R^{3\delta}$. Because $\delta=\epsilon^2$, $ K \ll R^\epsilon$ and $r \ge R^{-1/2}$, we obtain $ K^4R^{\epsilon^4+\epsilon/2+3\delta/p}\le 1/6 R^\epsilon r^{\epsilon^2}$. This completes the proof of Theorem \ref{main result}.

To prove Theorem \ref{BT}, we show the following maximal estimate.

\begin{thm}\label{8.1}

Let $0< r \le 1$, $\xi_0  \in \mathbb A_1$, and $K=K(\epsilon)$ be a sufficiently large constant. Suppose that 
the supports of $ \widehat f $ and  $\widehat g $ are contained in $B^2(\xi_0, r)$ and separated by $K^{-1}r$. If $f,g$ are concentrated on the wave packets from $T_{ Z}(R^{-\frac12+\delta'})$ with $\delta' \le 100\delta$,
then
there exist constants $c$ and $C$ such that
\begin{equation}\label{bimax}
\big\| |U_\alpha f|^{\frac12}| U_\alpha g|^{\frac12} \big\|_{L_x^3L_t^\infty(B_R)}
	\le CR^{c\delta'}\|f\|_2^{\frac12} \|g\|_2^{\frac12}.
\end{equation}
\end{thm}

In Theorem \ref{BT}, we are concerned with the function $f_{j,\mathrm{tang}}$ defined on a smaller ball $B_j$ of radius $R_1=R^{1-\delta}$. We can easily see that the wave packets of $f_{j,\mathrm{tang}}$ on the ball $B_j$ are
concentrated in $T_Z(R_1^{-\frac 12+\delta'})$ for some $\delta'\le 100\delta$ (see \cite{G2}), and we omit the details. Since $R_1^{\delta'} \le R^{c\delta}$, we obtain the desired bound in Theorem \ref{BT} from the estimate \eqref{bimax}.

\section{Proof of Theorem \ref{8.1}}\label{sec3}
In this section, we prove Theorem \ref{8.1} by considering linear and bilinear refined Strichartz estimates (Propositions \ref{linear} and \ref{bilinear}, respectively), which are variants of the estimates presented in \cite{DGL} for a class of elliptic functions.
We begin by defining a class of elliptic phase functions.

\subsection{Class of elliptic functions}
We consider the class of phase functions that are small perturbations of $\phi_0(\xi)=|\xi|^2/2$.

\begin{defn}
Let $0<\epsilon_0 \ll 1$, $\rho>0$, and $n \ge 10^3$ be a positive integer. We define a class of normalized phase functions by
\begin{align*}
\mathcal P(\epsilon_0,n) = \{ \phi \in \mathcal C^n (B^2(0,2)) : \| \phi-\phi_0 \|_{\mathcal C^n(B^2(0,2))} \le \epsilon_0 \}.
\end{align*}
Let $\phi \in \mathcal P(\epsilon_0,n)$,  $\xi_0 \in  \mathbb A_1$, and $\mathrm H\phi(\xi_0)$ be the Hessian matrix of $\phi$ at $\xi=\xi_0$. Then,  $\mathrm H\phi$ is positive definite on $B^2(0,2)$ and $\mathrm H\phi(\xi_0) = \mathrm T^{-1} \mathrm D \mathrm T$, where $\mathrm D$ is a diagonal matrix with eigenvalues $\lambda_1>0$ and $\lambda_2>0$, and $\mathrm T$ is a symmetric matrix. If we set $\mathrm H_{\xi_0} :=\sqrt{\mathrm H\phi(\xi_0)}$, then $\mathrm H_{\xi_0}=\mathrm T^{-1} \mathrm D^{1/2} \mathrm T$ with $\mathrm D^{1/2} = (\sqrt{\lambda_1} e_1, \sqrt{\lambda_2 }e_2)$. We denote the normalization of $\phi$ by
\begin{equation}\label{phi}
\phi_{\xi_0}^\rho(\xi)
	=\rho^{-2} \big(\phi(\rho \mathrm H_{\xi_0}^{-1}\xi+\xi_0)
-\phi(\xi_0) - \rho \nabla\phi(\xi_0)\cdot \mathrm H_{\xi_0}^{-1} \xi \big).
\end{equation}
\end{defn}
Then, we observe the following, which plays an important role in the induction argument.
We denote by $\interior \mathbb A_r$ the interior of $\mathbb A_r$.
\begin{lem}\label{reducephi}
Let $\epsilon_0>0$ and $\xi_0 \in  \interior\mathbb A_1$. Suppose that $\phi \in \mathcal C^n(\mathbb A_1)$ and the Hessian matrix of $\phi$ is positive definite.  Then, there exists a constant $\rho_0>0$ such that $\phi_{\xi_0}^\rho \in \mathcal P(\epsilon_0,n)$ whenever $\rho \le \rho_0$. Moreover, if $\phi \in \mathcal P(\epsilon_0,n)$, then for sufficiently small $\epsilon_0>0$, there exists a constant $\rho_1$ such that $\phi_{\xi_0}^\rho \in \mathcal P(\epsilon_0,n)$ whenever $\rho\le \rho_1$.
\end{lem}

\begin{proof}
By \eqref{phi} and Taylor's expansion, we may write 
\[
\phi_{\xi_0}^\rho(\xi) =|\xi|^2/2+ \mathcal E(\xi,\xi_0,\rho)
\]
where
$ \| \mathcal E(\cdot, \xi_0,\rho)\|_{\mathcal C^n(\mathbb A_1)} =O(\rho |\mathrm H_{\xi_0}^{-1}\xi|^3)$.
Thus, we can take $\rho_0$ such that
$\| \phi_{\xi_0}^\rho - \phi_0 \|_{\mathcal C^n(\mathbb A_1)} \le C\rho \le \epsilon_0$ holds for any $\rho \le \rho_0$.
Similarly, if $\phi \in \mathcal P(\epsilon_0,n)$ and $\xi_0 \in B^2(0,1)$, then we can take $\rho_1>0$ such that $\phi_{\xi_0}^\rho \in \mathcal P(\epsilon_0,n)$ whenever $\rho \le \rho_1$.
\end{proof}

Suppose that $\phi \in \mathcal C^n(\mathbb A_1)$ and the Hessian matrix of $\phi$ is positive definite. For a given small $\epsilon_0>0$, by partitioning $B^2(0,1)$ into smaller balls of radius $\rho_0$
and applying Lemma \ref{reducephi}, $\phi_{\xi_0}^\rho\in
\mathcal P(\epsilon_0,n)$ for any $\rho \le \rho_0$.  Therefore, hereafter, we may fix $n \ge 10^3$ and simply denote $\mathcal P(\epsilon_0,n)$ by $\mathcal P(\epsilon_0)$. To prove Theorem \ref{8.1}, it suffices to consider $\phi\in \mathcal P(\epsilon_0)$.

\subsection{Parabolic rescaling}
We define a linear map $\mathcal A_{\xi_0}^\rho:\mathbb R^3 \rightarrow \mathbb R^3$ by
\begin{align}\label{map}
(\mathcal A_{\xi_0}^\rho)^{-1}(x,t)=
(\rho^{-1} \mathrm H_{\xi_0}^{t} x- \rho^{-2}t \nabla \phi(\xi_0), \rho^{-2}t).
\end{align}

\begin{lem}\label{lem:scale}
Let $\epsilon_0>0$ be sufficiently small, $\xi_0 \in \interior \mathbb A_1$, and $\phi \in \mathcal P(\epsilon_0)$.
Suppose that $\widehat f$ is supported on a ball $B^2(\xi_0, \rho)$ 
for $\rho \le \rho_1$, where $\rho_1$ is given in Lemma \ref{reducephi}. Then, there exist $\widetilde{f}$, $\widetilde T$, and a constant $C=C(\xi_0,\rho)$ such that
\begin{equation}\label{frho}
\| e^{it\phi} f \|_{L^q(T)} 
	=C\rho^{1-\frac{4}q} \| e^{it \phi_{\xi_0}^\rho} \widetilde f \|_{L^q(\widetilde T)},
\end{equation}
where $\mathcal F(\widetilde f)$ is supported on $B^2(0,1)$ such that  $\| \widetilde f\|_2 =\|f\|_2$, and
\[
\widetilde T=\{(x,t): (\mathcal A_{\xi_0}^\rho)^{-1} (x,t)\in T \}.
\]
\end{lem}

\begin{rmk}
Let $\phi \in \mathcal P(\epsilon_0)$. 
Suppose that $T$ is a tube of dimensions $\rho^{-1}M \times \rho^{-1}M \times \rho^{-2}M$ centered at the origin with its long axis parallel to $(-\nabla\phi(\xi_0),1)$, i.e.,
\[
T=\{ (x,t) : |x+t\nabla \phi(\xi_0)| \le \rho^{-1}M, |t| \le \rho^{-2}M\}.
\]
Then, $\widetilde T
=\{(x,t) : | \rho^{-1} \mathrm H_{\xi_0}x| \le \rho^{-1}M, |\rho^{-2}t | \le \rho^{-2}M \}$
and $\widetilde T$ is contained a cube of side length $CM$
for some $C=C(\epsilon_0)$.
\end{rmk}

\begin{proof}[Proof of Lemma \ref{lem:scale}]
Let $| \mathrm H|$ denote the determinant of a matrix $\mathrm H$.
By change of variables $\xi \rightarrow \rho \mathrm H_{\xi_0}^{-1}\xi+\xi_0$, we have
\[
|e^{it\phi}f(x)|
	=\rho | \mathrm H_{\xi_0}^{-1}|^{\frac 12} \Big|\int e^{i (x,t)\cdot 
	(\rho \mathrm H_{\xi_0}^{-1}\xi, \phi(\rho \mathrm H_{\xi_0}^{-1}\xi+\xi_0))} \rho |\mathrm H_{\xi_0}^{-1}|^{\frac 12}\widehat f(\rho \mathrm H_{\xi_0} ^{-1}\xi+\xi_0)\,d\xi \Big|.
\]
We define $\widetilde f$ by 
\begin{align}\label{tildef}
\mathcal F \widetilde f(\xi) =\rho |\mathrm H_{\xi_0}^{-1}|^{\frac 12} \widehat {f}(\rho \mathrm H_{\xi_0}^{-1}\xi+\xi_0).
\end{align}
Then, $\widetilde f $ has a Fourier support on $B^2(0,1)$ and
 $\| \widetilde f\|_2 = \| f \|_2$. By the definition of $\phi_{\xi_0}^\rho$ (\eqref{phi}), we note that
\[
\big(\rho^{-1}\mathrm H_{\xi_0}^tx-\rho^{-2} t \nabla \phi(\xi_0) \big)
\cdot \rho\mathrm H_{\xi_0}^{-1}\xi
+\rho^{-2}t\big( \phi(\rho \mathrm H_{\xi_0}^{-1}\xi+\xi_0) - \phi(\xi_0) \big)
=x \cdot\xi +  t \phi_{\xi_0}^\rho(\xi).
\]
Thus, by change of variables $x\rightarrow \rho^{-1} \mathrm H_{\xi_0}^{t}x-t \nabla \phi(\xi_0)$ and $t \rightarrow \rho^{-2}t$ (i.e., $(x,t) \rightarrow (\mathcal A_{\xi_0}^\rho)^{-1}(x,t)$), the desired bound \eqref{frho} follows by taking $C=|\mathrm H_{\xi_0}|^{1/q-1/2}$.
\end{proof}

Now, we observe that the condition \eqref{tangentZ} is preserved after parabolic rescaling.

\begin{lem}\label{tan_phirho}
Let $0<\rho \le \rho_0$ and $\xi_0 \in  \interior \mathbb A_1$. Suppose that $f$ is concentrated on the wave packets
from $T_{Z}(R^{-1/2+\delta'})$ associated with $\phi$ for some $Z=Z(P)$. 
If $\widetilde f$ is given by \eqref{tildef}, then $\widetilde f$ is concentrated on the wave packets from $T_{Z'}(\rho^{-1}R^{-1/2+\delta'})$ associated with $\phi_{\xi_0}^\rho$ and $Z'=Z'(\widetilde P)$, where $\widetilde P=P \circ (\mathcal A_{\xi_0}^\rho)^{-1}$.
\end{lem}

\begin{proof}
It suffices to show that
\begin{equation}\label{angle12}
|(-\nabla_{\xi'} \phi_{\xi_0}^\rho(\xi') ,1) 
\cdot \nabla_{x',t'} \widetilde P| /| \nabla_{x',t'}\widetilde P | \lesssim \rho^{-1}R^{-1/2+\delta'},
\end{equation}
where $\xi'= \rho^{-1} \mathrm H_{\xi_0}(\xi-\xi_0)$ and  $(x',t') = \mathcal A_{\xi_0}^\rho(x,t)$. Since $\widetilde P(x',t')=P(\rho^{-1} \mathrm H_{\xi_0}^{t} x'- \rho^{-2}t' \nabla \phi(\xi_0), \rho^{-2}t')$,
we have
\begin{align}\label{QQQ}
\begin{cases}
\nabla_{x'}\widetilde P= \rho^{-1} \mathrm H_{\xi_0} \nabla_x P, \\
\partial_{t'}\widetilde P=\rho^{-2} \big(\partial_t P - \nabla_xP \cdot \nabla \phi(\xi_0)).
\end{cases}
\end{align}
Combining this with $\nabla_{\xi'} \phi_{\xi_0}^\rho(\xi') =\rho^{-1} \mathrm H_{\xi_0}^{-t} ( \nabla \phi(\xi) - \nabla \phi(\xi_0))$, we get
\[
(-\nabla_{\xi'} \phi_{\xi_0}^\rho(\xi') ,1)  \cdot (\nabla_{x'}\widetilde P, \partial_{t'} \widetilde P)=\rho^{-2} (-\nabla \phi(\xi), 1) \cdot (\nabla_{x}P, \partial_t P).
\]
On the other hand, from \eqref{QQQ}, we can easily deduce that $|\nabla_{x',t'}\widetilde P|\gtrsim \rho^{-1}|\nabla_{x,t}P| $
by considering the cases $ |\partial_t P| \ge 2|\nabla_xP\cdot \nabla \phi(\xi_0)|$ and $ |\partial_t P| \le 2|\nabla_xP\cdot \nabla \phi(\xi_0)|$ separately.
By the assumption
$|(-\nabla\phi,1)\cdot \nabla_{x,t}P | /|\nabla_{x,t}P| \lesssim R^{-1/2+\delta'}$,
the desired estimate \eqref{angle12} follows.
\end{proof}

\subsection{Linear refined Strichartz estimates}
Before proving Theorem \ref{8.1}, we consider
the linear and bilinear refined Strichartz estimates.
We first prove the linear refined Strichartz estimates (Proposition \ref{linear})
and then prove the bilinear estimates (Proposition \ref{bilinear}) by using Proposition \ref{linear}.

\begin{prop}\label{linear}
Let $\phi \in \mathcal P(\epsilon_0)$. Suppose that $f$ is concentrated on the wave packets from $T_Z(R^{-1/2+\delta'})$ and $\widehat f$ is supported on $B^2(0,1)$. Let $Q_1,Q_2,\dots$ be lattice cubes of side length $R^{1/2}$ in $B^3(0,R)$. Suppose that $M$ cubes $Q_j$ are contained in $B^2(0,R) \times [t_0 , t_0+R^{1/2}]$
for each $t_0 \in R^{1/2}\mathbb Z \cap [0,R]$, and for each $Q_j$, 
\begin{equation}\label{qjconst}
\| e^{it \phi} f \|_{L^6(Q_j)} \quad~ \text{is essentially constant}.
\end{equation}
Then, for any $\epsilon>0$, there exist constants $C_\epsilon,C \ge 1$ such that
\begin{equation}\label{unionQ}
\| e^{it \phi} f \|_{L^6(\cup_j Q_j)}
\le C_\epsilon R^{-1/6+\epsilon+C\delta'} M^{-1/3} \|f\|_2.
\end{equation}
\end{prop}

We start by recalling the $l^2$-decoupling inequality for an elliptic paraboloid, which was obtained by Bourgain and Demeter \cite{BD}.

\begin{thm}\label{BDlower}
Suppose that $\widehat g$ is supported in a $\sigma$-neighborhood of an elliptic paraboloid $S$ in $\mathbb R^2$. Let $\tau$ be rectangles of dimensions $\sigma^{1/2}\times \sigma$, which cover a $\sigma$-neighborhood of $S$. If $\widehat{g}_\tau=\widehat g \chi_\tau$, then for $\epsilon>0$ and $2 \le p \le 6$, we have
\[
\| g \|_{L^p(\mathbb R^2)}
\le C_\epsilon \sigma^{-\epsilon} \big( \sum_\tau \| g_\tau \|_{L^p(\mathbb R^2)}^2 \big)^{1/2}.
\]
\end{thm}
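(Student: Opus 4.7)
The plan is to deduce this statement from the Bourgain--Demeter $\ell^2$-decoupling theorem for the standard paraboloid via three steps: localization to a single graph patch, affine normalization to a perturbation of the paraboloid, and a uniform induction on scales within the class $\mathcal N(L,c_0)$ of Remark \ref{reducephi}.

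First, I would cover the elliptic surface $S$ by finitely many smooth graph patches $S_\ell=\{(\xi',\Psi_\ell(\xi')):\xi'\in U_\ell\}$ where each $\Psi_\ell$ is smooth with uniformly positive-definite Hessian on $U_\ell$. Using a smooth partition of unity on the frequency side, write $g=\sum_\ell g_\ell$ with $\widehat{g_\ell}$ supported in the $\sigma$-neighborhood of a single patch; since there are $O(1)$ patches, the triangle inequality reduces the problem to decoupling on one patch.

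Second, on a fixed patch centered at $\xi_0'$, I would apply the affine rescaling of Remark \ref{reducephi}: with $H=\sqrt{\nabla^2\Psi_\ell(\xi_0')}$, change variables $\xi'=\xi_0'+H^{-1}\eta'$ and subtract off the affine part of $\Psi_\ell$ at $\xi_0'$. This affine (plus Galilean) transformation preserves $L^p$-norms up to fixed constants, maps $\sigma^{1/2}$-rectangles to comparable $\sigma^{1/2}$-rectangles, and reduces $\Psi_\ell$ to a function $\widetilde\Psi$ whose Taylor expansion at the origin reads $|\eta'|^2/2+O(|\eta'|^3)$. By restricting each patch to a sufficiently small ball, I can ensure $\widetilde\Psi\in\mathcal N(L,c_0)$ for any preassigned small $c_0$; the cubic remainder is harmless since it is much smaller than $\sigma$ on the relevant $\sigma^{1/2}$-caps and can be absorbed into the $\sigma$-neighborhood of the new graph.

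The remaining task, and the main obstacle, is to prove the decoupling uniformly over the class $\mathcal N(L,c_0)$. I would formulate the decoupling constant $\mathrm{Dec}(\sigma)$ as a supremum over $\mathcal N(L,c_0)$ and run the Bourgain--Demeter induction on scales from \cite{BD}. The three ingredients of their argument are: the Bennett--Carbery--Tao multilinear Kakeya and restriction estimates (which require only transversality and bounded curvature, hence are stable under elliptic perturbation), the lower-dimensional $\ell^2$-decoupling on affine sections of the surface, and parabolic rescaling of $\sigma^{1/2}$-caps back to unit scale. Unlike the paraboloid case, rescaling a cap of an element of $\mathcal N(L,c_0)$ does not return the same surface; however, by the estimates in Remark \ref{reducephi}, the rescaled phase at scale $\rho\le\sigma^{1/2}$ differs from $|\xi|^2/2$ in $C^L$-norm by $O(\rho)$, which is strictly smaller than $c_0$. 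Hence rescaling sends $\mathcal N(L,c_0)$ strictly inside itself, the BD recursion closes within the class uniformly, and the iteration yields $\mathrm{Dec}(\sigma)\le C_\epsilon\sigma^{-\epsilon}$ for every $\epsilon>0$, as required.
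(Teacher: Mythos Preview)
The paper does not prove Theorem \ref{BDlower} at all: it is stated as a citation from \cite{BD} and used as a black box (the sentence preceding it reads ``The following is the $l^2$-decoupling inequality for elliptic surfaces which can be obtained by \cite{BD}''). Your proposal therefore goes beyond what the paper does, by actually sketching the reduction from the elliptic case to the paraboloid case.

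Your sketch is essentially correct and is the standard route, consistent with the remark in \cite{BD} that their argument extends to elliptic surfaces. One small point deserves care: you assert that parabolic rescaling sends $\mathcal N(L,c_0)$ strictly inside itself, but as Remark \ref{remscale} of the present paper warns, $\phi_{\rho,\xi_0}$ need not lie in $\mathcal N(L,c_0)$ without an additional preliminary decomposition into balls of radius $\sim 1/(100^L d!)$; you do mention restricting to sufficiently small balls in step two, so this is handled, but it should be invoked again at each rescaling step in the induction. With that caveat, the three ingredients you list (multilinear restriction, lower-dimensional decoupling, parabolic rescaling) are indeed stable across $\mathcal N(L,c_0)$, and the recursion closes uniformly to give $\mathrm{Dec}(\sigma)\le C_\epsilon\sigma^{-\epsilon}$.
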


Let us consider the wave packet decomposition
\begin{align}\label{FT}
f=\sum_{T} f_{T}
\end{align}
such that the Fourier support of $ f_{T}$ is contained in a ball of radius $R^{-1/4}$
and $f_{T}$ is essentially supported on a ball of radius $R^{3/4}$.
Then, $e^{it\phi} f_{T}$ restricted to the ball $B^3(0,R)$ is essentially supported on a tube $T$ of dimensions $R^{3/4} \times R^{3/4} \times R$.
Since $f$ is concentrated on the wave packets from $T_Z(R^{-1/2+\delta'})$
for some $Z$, we can apply Theorem \ref{BDlower} and obtain the following.

\begin{prop}\label{decouple}
Let $\phi \in \mathcal P(\epsilon_0)$ for sufficiently small $\epsilon_0>0$.
Let $f=\sum_T f_T$ be as stated above. Suppose that $f$ is concentrated on the wave packets from $T_Z(R^{-1/2+\delta'})$ for some $Z=Z(P)$ and suppose that $Q$ is a cube of side length $R^{1/2}$ contained in the $2R^{1/2+\delta'}$-neighborhood of $Z$. Then, 
\begin{align}\label{conf-dcp}
\| e^{it\phi}f \|_{L^6(Q)} \le C_\epsilon R^{\epsilon}
\big( \sum_{T} \| e^{it\phi} f_{T} \|_{L^6(\omega_{Q})}^2 \big)^{1/2}
+O(R^{-N})\|f\|_2.
\end{align}
Here, $\omega_Q(z)=(1+R^{-\frac 12}|z-c_Q|)^{-100}$, where $c_Q$ is the center of the cube $Q$.
\end{prop}

\begin{proof}
Let $\psi\in \mathcal S(\mathbb R^3)$ such that $\psi=1$ on $B^3(0,1)$ and $\widehat \psi$ is supported on $B^3(0,1)$.  By letting $\psi_Q(z)=\psi(CR^{-\frac 12}(z-c_Q))$ for some constant $C>0$ such that $\psi=1$ on $Q$, we get
\[
\| e^{it\phi}f \|_{L^6(Q)}
\le \|  \psi_{Q} \,e^{it\phi} f \|_{L^6(\mathbb R^3)}.
\]
Since $f$ is concentrated on the wave packets from $T_Z(R^{-1/2+\delta'})$, it suffices to consider
the wave packets that are contained in the $R^{\frac 12+\delta'}$-neighborhood of a plane $W=T_zZ(P)$.
We claim that
\begin{align}\label{dcp-plane}
\|\psi_{Q}  e^{it\phi} f \|_{L^6(W)}
\le C_{\epsilon} R^{\frac \epsilon2}
\big(\sum_{T} \|   \psi_{Q} e^{it\phi} f_{T} \|_{L^6(W)}^2 \big)^{1/2}
+O(R^{-N})\|f\|_2.
\end{align}
By assuming \eqref{dcp-plane}, we prove \eqref{conf-dcp}. By integrating along the $W^\perp$ axis and using Minkowski's inequality and Fubini's theorem, we obtain
\begin{align*}
\| e^{it\phi} f \|_{L^6(Q)}
&\le C_{\epsilon} R^{\frac \epsilon 2} \big(\sum_{T} \|  \psi_{Q}
e^{it\phi} f_{T} \|_{L^6(\mathbb R^{3})}^2 \big)^{1/2}+O(R^{-N})\|f\|_2.
\end{align*}
Here, we use the fact that the number of tubes $T$ intersecting $Q$ is $\lesssim R^{\epsilon/100}$. Since $\psi_Q$ decays rapidly outside $Q$, we get the desired result \eqref{conf-dcp}.
	
Now, we prove \eqref{dcp-plane}. It suffices to show that the restriction of $\mathcal F(\psi_{Q} e^{it\phi} f)$  to $W$ is contained in an $R^{-1/4}$-neighborhood of an elliptic paraboloid. 
Since $\mathcal F(\psi_Q)$ is supported on $B^3(0,R^{-1/2})$, it suffices to consider the restriction of $\mathcal F(e^{it\phi}f)$ to $W$. Let $\mathbf n$ be the unit normal vector of $W$.
Since $\widehat f$ is supported on $B^2(0,1)$, we have $|\mathbf n \cdot e_3| < 1/2$.  By rotation and dilation, we may assume that $\mathbf n=\mathbf n'/|\mathbf n'|$, where $\mathbf n'=(0,1,n_3)$ for some $|n_{3}| \lesssim 1$. 
Since $\partial_{\xi_2}^2\phi \neq 0$ on $B^2(0,1)$, by the implicit function theorem, there is a function $g \in \mathcal C_0^{1}((-1,1))$ such that 
\begin{align}\label{eqeq1}
(-\nabla \phi(\xi_1, g(\xi_1)),1) \cdot \mathbf n=0,
\end{align}
and equivalently,
$n_3=\partial_{\xi_2} \phi (\xi_1,g(\xi_1))$.

Note that $|(-\nabla \phi(\xi),1) \cdot \mathbf n|=|-\partial_{\xi_2}\phi+n_3| \lesssim R^{-1/4}$
on the support of $\supp \widehat{f}$. Since $\partial_{\xi_2}^2 \phi \neq0$, by the mean value theorem, we have
\begin{align}\label{n3}
|\xi_{2}-g(\xi_1)| \lesssim R^{-1/4}.
\end{align}
Therefore, we may write
\begin{align*}
(\xi,\phi(\xi))= (\xi_1,0,\widetilde \phi(\xi_1))+\xi_2 \mathbf n' +\mathcal E(\xi)e_3,
\end{align*}
where $\widetilde \phi(\xi_1)=\phi(\xi_1, g(\xi_1))-n_3g(\xi_1)$ and
$\mathcal E(\xi)= \phi(\xi)-\phi(\xi_1,g(\xi_1))-n_3(\xi_{2}-g(\xi_1))$.
By \eqref{n3} and the mean value theorem, it is easy to see that $\mathcal E(\xi)=O(R^{-1/2})$. Moreover, 
by \eqref{eqeq1}, $\partial_{\xi_1}\widetilde \phi(\xi_1)
=(\partial_{\xi_1} \phi) (\xi_1, g(\xi_1))$; hence,
$\partial_{\xi_1}^2\widetilde \phi(\xi_1) =(\partial_{\xi_1}^2\phi) (\xi_1,g(\xi_1)) +O(\epsilon_0)$	
whenever $\phi \in \mathcal P(\epsilon_0)$ for sufficiently small $\epsilon_0>0$.
Thus, the curve parameterized by $\xi_1 \rightarrow \widetilde \phi(\xi_1)$  is contained in an $R^{-1/2}$-neighborhood of the elliptic paraboloid. By Theorem \ref{BDlower}, we obtain the desired bound \eqref{dcp-plane}.
\end{proof}

Using Proposition \ref{decouple}, we can prove Proposition \ref{linear}
by induction.

\begin{proof} [Proof of Proposition \ref{linear}]
Let us set $\mathcal A_0:=\mathcal A_{\xi_0}^{R^{-1/4}}$ for simplicity. Recalling \eqref{FT},
for each tube $T$, we choose a cube $Q_T$ of side length $R^{1/2}$ that is contained in $\mathcal A_0( T)$.
We decompose $Q_T$ into horizontal strips $S'$ of height $R^{1/4}$ in $Q_T$ such that $Q_T=\cup S'$.
Furthermore, each strip $S'$
is decomposed into cubes $Q'$ of side length $R^{1/4}$.
We set $T' := \mathcal A_0^{-1}(Q')$, which is a tube of size $\sim R^{1/2}\times R^{1/2}\times R^{3/4}$
such that the union of all $T'$ covers $T$ (see Figure \ref{figure1}).
	
\begin{figure}
\centering	\includegraphics[scale=0.45]{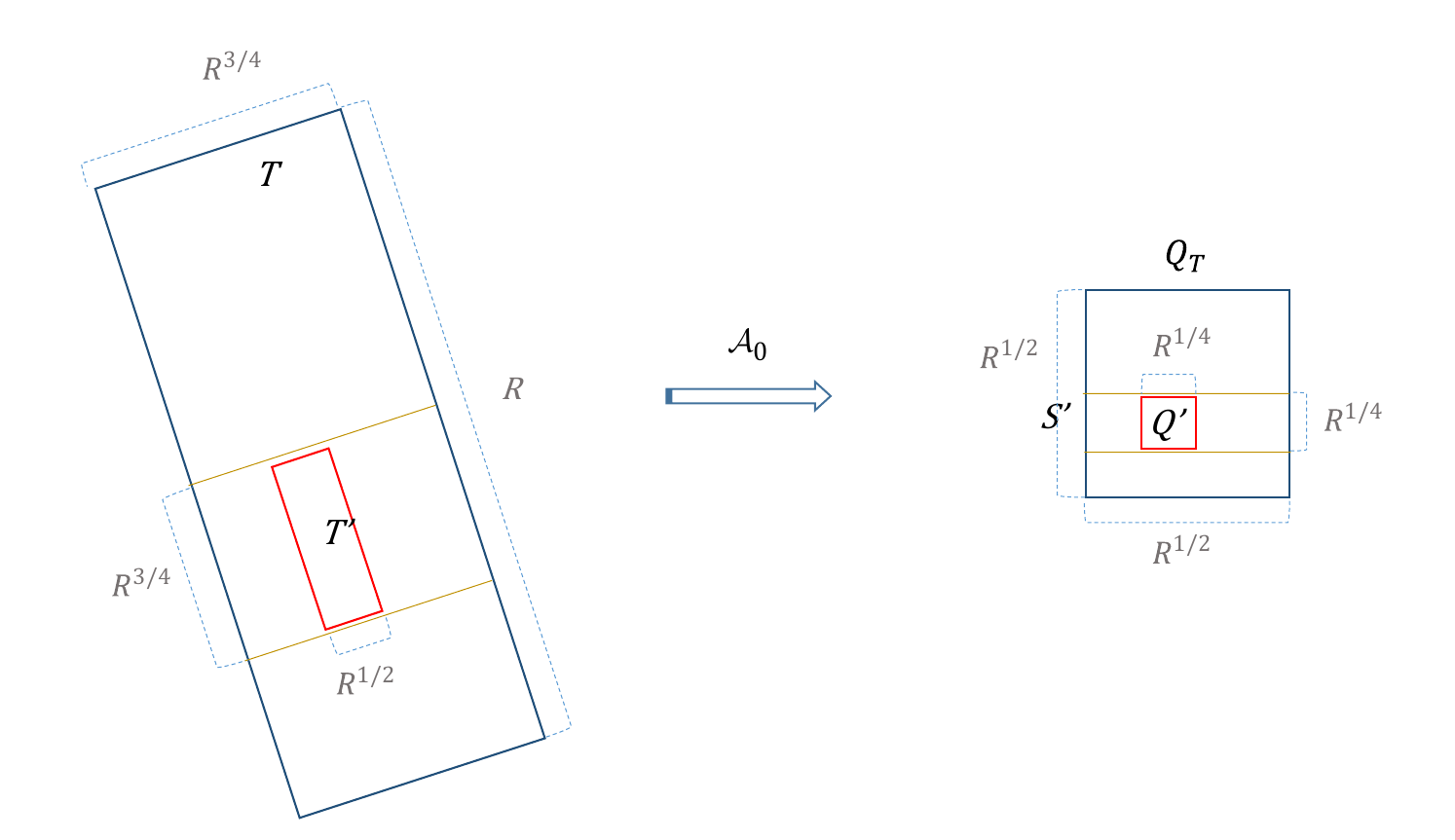}
\caption{}
\label{figure1}
\end{figure}

By dyadic pigeonholing on the size of $\|f_T\|_2$, we may assume that $\| f_T\|_2$ is essentially constant for each tube $T$. In fact, the number of $h$ is only $C\log R$ such that $\|f_T\|_2 \sim h$
since it is negligible if $\| f_T\|_2 \le R^{-N}\|f\|_2$.
Therefore, we choose one of $h$ such that $\|f_T\|_2$ is essentially constant for a fraction $\sim 1/(\log R)$
of tubes $T$. For  a fixed such tube $T $, we again perform dyadic pigeonholing on the size of $\|e^{it\phi} f_T \|_{L^6(T')}$ such that 
\begin{align}\label{t'} 
\|e^{it\phi} f_T \|_{L^6(T')} \quad~ \text{is essentially constant,}	
\end{align}
which is greater than $R^{-N}\|f\|_2$ since this part can be absorbed in the error term in \eqref{conf-dcp} if the constant is less than  $R^{-N}\|f\|_2$. We sort $T'$ further according to the number of $T'$ arranged along the short axis of $T$. Precisely, we may assume that
\[
\#\{ T' :T' \subset \mathcal A_0^{-1}(S') \} \sim M'
\]
for a dyadic number $M'$. For simplicity, by abuse of notation, we denote such tubes by $T'$. Then, it follows that
\begin{equation}\label{Bbox}
\| e^{it\phi} f \|_{L^6(Q_j)}
\lesssim (\log R)^3 \big\| \sum_{T} \sum_{T'} \chi_{\cup T'}  e^{it\phi} f_{T} \big\|_{L^6(Q_j)}
\end{equation}
for a fraction $\sim 1/(\log R)^3$ of all cubes $Q_j$ in $\cup_j Q_j$.
Finally, we sort the cubes $Q_j$ further such that $Q_j$ satisfies \eqref{Bbox}  and
each $Q_j$ is contained in $\sim \mu$ tubes $T$ such that
$Q_j \subset T' \subset T$.
Let $\mathcal Q$ denote the set of such cubes $Q_j$; then, we see that $\# \mathcal Q  \gtrsim (\log R)^{-4}M$ by dyadic pigeonholing. By \eqref{qjconst}, we have
$\| e^{it\phi} f\|_{L^6(\cup_j Q_j) }^6
\lesssim (\log R)^4 \| e^{it \phi} f\|_{L^6(\cup_{Q_j \in \mathcal Q}Q_j)}^6.
$
Hence, it suffices to consider  cubes $Q_j \in \mathcal Q$.
	
By \eqref{Bbox}, \eqref{conf-dcp}, and H\"older's inequality, we get
\begin{align}\label{ineq1}
\| e^{it\phi} f \|_{L^6(Q_j)}
&\le C_\varepsilon (\log R)^4 R^{\varepsilon} \mu^{\frac 13}\big( \sum_{\substack{T}} \|  \chi_{\cup T'} e^{it\phi} f_T 
\|_{L^6(\omega_{Q_j})}^6
\big)^{1/6}  +O(R^{-N})\|f\|_2.
\end{align}
Since $T'= \mathcal A_0^{-1} (Q')$ for some cube $Q'$ of side length $R^{1/4}$,
by \eqref{frho}, we have
\begin{align*}
\sum_{Q_j } \| \chi_{\cup T'}  e^{it\phi} f_T \|_{L^6(\omega_{ Q_j})}^6
\lesssim  
R^{-\frac 12} \| e^{it \widetilde \phi} \widetilde f_{T}
\|_{L^6(\cup Q')}^6
\end{align*}
where $\mathcal F(\widetilde f_{T})$ is given by \eqref{tildef} for $f_T$ in place of $f$.
Then, $\mathcal F(\widetilde f_T)$ is supported on $B^2(0,1)$ and $\| \widetilde f_T\|_2 = \| f_T\|_2$.
 Combining this with \eqref{ineq1}, we have
\begin{align}\label{ineq111}
\| e^{it\phi} f \|_{L^6(\cup_j Q_j)}
\le C_\varepsilon  R^{-\frac 1{12}+ 2\varepsilon} \mu^{\frac 13}
\Big(\sum_{T }\| e^{it \widetilde \phi} \widetilde f_{T}
\|_{L^6(\cup Q')}^6\Big)^{\frac 16}
+O(R^{-N})\|f\|_2.
\end{align}
By the choice of \eqref{t'} combined with \eqref{frho} for $f=f_T$,
we observe that $\| e^{it \widetilde \phi} \widetilde f_{T} \|_{L^6(Q')}$
is essentially constant for each $Q'$.
Since $\phi \in \mathcal P(\epsilon_0)$, we have $\widetilde \phi \in \mathcal P(\epsilon_0)$
by Lemma \ref{reducephi} provided that $R^{-1/2} \le \rho_1$.
Thus, by applying the induction hypothesis to the right-hand side of \eqref{ineq111} with $R^{1/2}$ and $M'$ instead of $R$ and $M$, respectively,  we have
\begin{align}\label{key}
\| e^{it\phi} f\|_{L^6(\cup_jQ_j)}&
\le C_\varepsilon   
	R^{-\frac 1{6}+2\epsilon+\frac12C\delta'} \mu^{\frac 13}(M')^{-\frac 13}
\big(\sum_{T}  \|f_T \|_2^6\big)^{\frac 16}.
\end{align}
Since $\|f_T\|_2$ is essentially constant, it is easy to see that
$
(\sum_{T}  \|f_T \|_2^6)^{1/6} \lesssim (\# T)^{-1/3} \|f\|_2.
$
We claim that
\begin{align}\label{MM'}
M\mu \lesssim (\log R)^4 M'\#T.
\end{align}
Once we have \eqref{MM'}, \eqref{key} is, in turn, bounded by
\begin{align*}
\| e^{it\phi} f\|_{L^6(\cup_jQ_j)}
\le C_\varepsilon R^{-\frac 16+3\varepsilon+\frac 12C\delta'}M^{-\frac 13} \|f\|_2.
\end{align*}
Taking $\varepsilon =\epsilon/C$ for sufficiently large $C>0$ gives the desired bound \eqref{unionQ}.

Now, we prove \eqref{MM'}.
Let $S^{(t_0)}$ be the strip given by 
\[
S^{(t_0)}=\mathbb R^2 \times [t_0,t_0+R^{1/2}]
\]
for some $t_0 \in [0,R] \cap R^{1/2}\mathbb Z$.
It suffices to consider cubes $Q_j$
arranged along the strip $S^{(t_0)}$.
By the choice of $Q_j$, $T$, and $T'$, we note that
\begin{align}\label{QQ}
(\log R)^{-4} M\mu 
\lesssim
\# \{ (Q_j, T) : Q_j \subset T' \cap S^{(t_0)} \,\,\text{for some}\,\, T' \subset T\}.
\end{align}
Note that the angle between the long axis of $T'$ and
the vectors contained in the $x$-plane $\mathbb R^2$ is away from $0$.
Hence, each tube $T'$ contains at most a finite number of cubes $Q_j$ in $S^{(t_0)}$.
Similarly, since $T' \subset \mathcal A_0^{-1}(S')$ for some $S'$, the number of $S'$ such that $S^{(t_0)} \cap \mathcal A_0^{-1} (S') \neq \emptyset$ is at most $C$. Hence, the number of $Q_j$ is smaller than $C$ times that of $T'$ such that $T' \subset \mathcal A_0^{-1}(S')$. Therefore, $\#Q_j \le C M'$. 
Thus, the right-hand side of \eqref{QQ} is bounded by $M'\#T$, which gives the desired bound \eqref{MM'}.
\end{proof}

\subsection{Bilinear refined Strichartz estimates}

Now, we establish the bilinear refined Strichartz estimates using Proposition \ref{linear}.

\begin{prop}\label{bilinearM}
Let  $\phi \in \mathcal P(\epsilon_0)$, $R^{-1/2}\le r \le 1$,
and $K$ be a sufficiently large constant. 
Suppose that the Fourier supports of $f$, $g$ are contained in $B^2(\xi_0,r)$
and separated by $K^{-1}r$,
and $f$, $g$ are concentrated on the wave packets from
$T_Z(R^{-1/2+\delta'})$ for $Z=Z(P)$ with 
$\delta' \le 100\delta$.
Let $Q_1,\dots,Q_M$ be cubes of side length $R^{1/2}$ contained in $B_R$ and
\begin{equation}\label{constM}
\big \| |e^{it \phi} f
e^{it \phi} g|^{\frac 12} \big\|_{L^6(Q_j)} \quad \text{is essentially constant}
\end{equation}
for each $Q_j$.
Then, for any $\epsilon>0$,
there exist $C_\epsilon, C\ge 1$ such that
\begin{align}\label{BM}
\big\| |e^{it \phi} f
e^{it \phi} g|^{\frac 12} \big\|_{L^6(\cup_{j=1}^MQ_j)}
\le C_\epsilon  r^{-\frac 16}M^{-\frac 16}R^{-\frac 16+\epsilon+C\delta'}
\|f\|_2^{\frac12} \| g \|_2^{\frac12}.
\end{align}
\end{prop}

Before proving Proposition \ref{bilinearM},
we first consider the special case $r=1$ of Proposition \ref{bilinearM} as follows, which is a direct consequence of Proposition \ref{linear}.

\begin{prop}\label{bilinear}
Let $\phi \in \mathcal P(\epsilon_0)$.
Suppose that the Fourier supports of $f,g$ are contained in $B^2(0,1)$
and separated by $K^{-1}$, and
$f$ and $g$ are concentrated on 
the wave packets from $T_Z(R^{-1/2+\delta'})$
for $\delta'\le 100\delta$.
Suppose that $Q_1,\dots,Q_M$ are cubes of side length $R^{1/2}$, and
$\| e^{it \phi} f \|_{L^6(Q_j)}$ and $\| e^{it \phi} g \|_{L^6(Q_j)}$ are essentially constant
for each $Q_j$. Then, for any $\epsilon>0$,
there exist $C_\epsilon, C \ge 1$ such that
\begin{equation}\label{B}
\big\| |e^{it \phi} f e^{it \phi} g |^{\frac 12} \big\|_{L^6(\cup_jQ_j)} \le C_\epsilon R^{-\frac 16+\epsilon+C\delta'}M^{-\frac 16} \|f\|_2^{\frac 12} \|g\|_2^{\frac 12}.
\end{equation}
\end{prop}

\begin{proof} 
Let us consider the wave packet decompositions
$f=\sum_T f_T$ and $g=\sum_{\widetilde T}g_{\widetilde T}$ (see \eqref{FT}).
By repeating the pigeonholing arguments as in the proof of Proposition \ref{linear},
we can choose $T_f$, $T_f'$, $M_f'$, $\mu_f$ and $T_g$ $T_g'$, $M_g'$, $\mu_g$,
respectively, in place of $T$, $T'$, $M$, $\mu$.
Then, by pigeonholing, there are $C(\log R)^{-8}M$ cubes $Q_j$  
such that \eqref{key} holds for $f$ and $g$ simultaneously.
We claim that
\begin{align}\label{nhh}
M \mu_f \mu_g &\lesssim M_f' M_g' \# T_f
\# T_g.
\end{align}
First, by assuming \eqref{nhh}, we prove \eqref{B}.
By H\"older's inequality,
\[
\big\| |e^{it \phi} f e^{it \phi} g |^{\frac 12} \big\|
_{L^6(\cup_jQ_j)}
\le  \| e^{it \phi} f \|_{L^6(\cup_jQ_j)}^{\frac 12}
\| e^{it \phi} g\|_{L^6(\cup_jQ_j)}^{\frac 12}.
\]
We apply \eqref{key} to the right-hand side and get
\begin{align*}
\big\| |e^{it \phi} f e^{it \phi} g |^{\frac 12} \big\|
_{L^6(\cup_jQ_j)}
&\le
C_\epsilon R^{-\frac 16+\epsilon+\frac 12C\delta'} 
\big( \mu_f \mu_g \big)^{\frac 16}
(\#T_f \#{T_g} M_f' M_g')^{-\frac 16}  \|f\|_2^{\frac 12}
\|g \|_2^{\frac 12}.
\end{align*}
Substituting \eqref{nhh}, we obtain the desired bound \eqref{B}.

Now, we prove \eqref{nhh}. For each fixed $T_f$ and $T_g$,
we first consider the intersection of $T_f' \subset T_f$ and $T_g'\subset T_g$.
Since the angle between $T_f'$ and $T_g'$ is about $1/K$,
the intersection of $T_f'$ and $T_g'$ is essentially a cube of side length $C_KR^{1/2}$.
Therefore, if cubes $Q_j$ are contained in $(\cup T_f') \cap (\cup T_g')$,
then the number of such $Q_j$ is at most $CM_f' M_g'$.

Recall that $\mu_f$ is the number of $T_f$ containing $Q_j$ such that $Q_j \subset T_f'$
and $\mu_g$ similarly.
Considering all pairs $\{  T_f, T_g, Q_j\}$ such that $Q_j \subset T_f' \cap T_g'$ for some $T_f' \subset T_f$
and $ T_g' \subset  T_g$, we obtain \eqref{nhh}.
\end{proof}

Now, we prove Proposition \ref{bilinearM}. If $r\sim1$, Proposition \ref{bilinearM} is a direct consequence of
Proposition \ref{bilinear}. Hence, we are only concerned with $R^{-1/2} \le r \ll1$. In this case, Proposition \ref{bilinearM} follows from Proposition \ref{bilinear} via rescaling in Lemma \ref{lem:scale}.

\begin{proof}[Proof of Proposition \ref{bilinearM}]
Let $\phi \in \mathcal P(\epsilon_0)$.
By \eqref{map},
$\mathcal A_{\xi_0}^r(B_R)$ is a tube of dimensions $Cr{ R} \times Cr{R} \times r^2{R}$
for a constant $C\ge1$.
Let us set
\[
P_j=\mathcal A_{\xi_0}^r(Q_j).
\]
Since $Q_j$ is a cube of side length $R^{1/2}$, $P_j$ is a tube of dimensions $CrR^{\frac 12} \times CrR^{\frac 12} \times r^2R^{\frac12}$ contained in a larger tube $\mathcal A_{\xi_0}^r(B_R)$.

We define $\widetilde f$ and $\widetilde g$ by \eqref{tildef} for $f$ and $g$, respectively, with $r=\rho$.
Further, let $\widetilde \phi=\phi_{\xi_0}^r$.
By Lemma \ref{reducephi}, we see that $\widetilde \phi \in \mathcal P(\epsilon_0)$ by taking sufficiently small $r\le \rho_1$.
Moreover, by  Lemma \ref{lem:scale}, we have
\[
\big\| |e^{it \phi} f
e^{it \phi} g|^{\frac12} \big\|_{L^6(\cup_{j=1}^MQ_j)}
\lesssim r^{\frac 13} 
\big\| |e^{it \widetilde \phi} \widetilde f \,\,
e^{it \widetilde \phi} \widetilde g|^{\frac 12} \big\|_{L^6(\cup_j P_j)}
\]
such that $\|\widetilde f\|_2 = \|f\|_2$, $\|\widetilde g \|_2=\|g\|_2$
and the Fourier supports of $\widetilde f$ and $\widetilde g$
are contained in $B^2(0,1)$ and separated by $K^{-1}$.
Moreover, by Lemma \ref{tan_phirho}, $\widetilde f$ and $\widetilde g$ are $r^{-1}R^{-\frac12+\delta'}$-concentrated in the wave packets from $T_Z(r^{-1}R^{-\frac12+\delta'})$ associated with $\widetilde \phi$ for $Z=Z(\widetilde P)$ for some polynomial $\widetilde P=P \circ (\mathcal A_{\xi_0}^r)^{-1}$.
Thus, to prove \eqref{BM}, it suffices to show that
\begin{equation}\label{A3}
\big\| |e^{it \widetilde \phi} \widetilde f \, e^{it \widetilde \phi} \widetilde g\,|^{\frac 12} \big\|_{L^6(\cup_j P_j)}
\le C_\epsilon  r^{-\frac12} M^{-\frac 16}R^{-\frac 16+\epsilon+C\delta'}
\|\widetilde f\|_2^{\frac 12} \| \widetilde g \|_2^{\frac 12}.
\end{equation}

Now, we make couple of reductions by pigeonholing.
For this purpose, let us set 
\begin{equation}
\begin{aligned}\label{r12}
\begin{cases}
r_1 = r^2R, \\
r_2 = rR^{1/2}.
\end{cases}
\end{aligned}
\end{equation} 
Recall that $\mathcal A_{\xi_0}^r(B_R)$ is a tube of dimensions $Cr^{-1}r_1\times Cr^{-1} r_1 \times r_1$.
Now, we decompose $\mathcal A_{\xi_0}^r(B_R)$ into cubes $Q_{r_1}$ of side length $r_1$. 
For each fixed cube $Q_{r_1}$,
we consider cubes $Q_{r_2}'$ of side length $r_2$ contained in $Q_{r_1}$
such that $\{ Q_{r_2}'\}$ covers $(\cup_jP_j) \cap Q_{r_1}$.
By dyadic pigeonholing on the size of $\|e^{it \widetilde \phi} \widetilde f \|_{L^2( Q_{r_2}'  )}$ and $\|e^{it \widetilde \phi} \widetilde g \|_{L^2( Q_{r_2}'  )}$, we may sort $Q_{r_2}'$,  which satisfy
\begin{equation}\label{sort3}
\|e^{it \widetilde \phi} \widetilde f \|_{L^2( Q_{r_2}'  )}, \quad~ 
\|e^{it \widetilde \phi} \widetilde g \|_{L^2( Q_{r_2}'  )} 
\quad \text{are essentially constant.}
\end{equation}
Let $\mathcal Q_{r_2}'$ be the collection of such cubes $Q_{r_2}'$.
We also choose a subcollection of $\{ Q_{r_1}\}$ that satisfy
\begin{align}\label{sort1}
& \big\| |e^{it \widetilde \phi} \widetilde f \,\, e^{it \widetilde \phi} \widetilde g|^{\frac 12} \big\|_{L^6(\cup_{Q_{r_2}'\in \mathcal Q_{r_2}'} Q_{r_2}' )}
\quad~ \text{is essentially constant}
\end{align}
for $Q_{r_2}' \in \mathcal Q_{r_2}'$ such that $Q_{r_2}' \subset Q_{r_1}$, and also satisfy
\begin{align}
&\|e^{it\widetilde \phi} \widetilde f \|_{L^2(R^{2a} Q_{r_1})},  
\quad~
\|e^{it\widetilde \phi}\, \widetilde g \|_{L^2(R^{2a} Q_{r_1})} 
\quad~ \text{are essentially constant}. \label{sort2}
\end{align}
Here, $R^{2a} Q_{r_1}$ is a cube that has the same center as $Q_{r_1}$ 
and the side length is $R^{2a}r_1$ for a sufficiently small constant $a=a(\epsilon)$, which will be determined later.
We denote such a collection of $Q_{r_1}$ by $\mathcal Q_{r_1}$.
We also denote by $\mathcal P=\mathcal P(r_1,r_2)$ the collection of tubes $P_j$
that intersect with $ Q_{r_2}' \subset Q_{r_1}$ for $Q_{r_2}'\in \mathcal Q_{r_2}'$  and $ Q_{r_1} \in \mathcal Q_{r_1}$.
By dyadic pigeonholing, we have
\begin{equation}\label{P1}
\# \mathcal P 	\gtrsim (\log R)^{-5} M.
\end{equation}

We put 
\begin{align*}
M_1 &= \#\{ Q_{r_1} \in \mathcal Q_{r_1}\},\quad
M_2  =\#\{ Q_{r_2}' \in \mathcal Q_{r_2}'\}.
\end{align*}
Since the number of $P_j \in \mathcal P$ contained in a cube $Q_{r_2}'$ is at
most $r^{-1}$, it is easy to see that
\begin{align}\label{MM12}
(\log R)^{-5} M \lesssim r^{-1} M_1M_2.
\end{align}
Thus, from \eqref{constM} and \eqref{P1}, we get
\begin{align}\label{sort13}
\big\| |e^{it \widetilde \phi} \widetilde f \,\, e^{it \widetilde \phi} \, \widetilde g |^{\frac 12} 
\big\|_{L^6(\cup_j P_j)}^6
&\lesssim (\log R)^5 \big\| |e^{it \widetilde \phi} \widetilde f \,\, e^{it \widetilde \phi} \, \widetilde g |^{\frac12}
\big\|_{L^6(\cup_{P_j \in \mathcal P}P_j)}^6.
\end{align}
By \eqref{sort1}, for any $Q_{r_1} \in \mathcal Q_{r_1}$ and $\cup Q_{r_2}' \subset Q_{r_1}$, we have
\begin{align}\label{unionPM}
\big\| |e^{it \widetilde \phi} \widetilde f\,\,  e^{it \widetilde \phi} \,\widetilde g |^{\frac 12} \big\|_{L^6(\cup_{P_j \in \mathcal P} P_j)}
&\lesssim 
M_{1}^{\frac 16} 
\big\| |e^{it \widetilde \phi} \widetilde f \,\, e^{it\widetilde \phi} \, \widetilde g |^{\frac12}  \big\|_{L^6(
\cup Q_{r_2}')}.
\end{align}

The right-hand side of \eqref{unionPM} can be estimated by applying the induction hypothesis.
By Lemma \ref{wave packet}, we have $\widetilde f=\sum_{T} \widetilde f_{T}$ and $\widetilde g=\sum_{T'} \widetilde g_{T'}$.
Here, the Fourier transforms of $ \widetilde f_{T}$ and $\widetilde g_{T'}$
are supported on balls of radius $r_1^{-1/2}$,
and $e^{it \widetilde \phi} \widetilde f_{T}$ and $e^{it \widetilde \phi} \widetilde g_T$ restricted to $B^3(0,r_1)$ are essentially supported on tubes $T$, $T'$ of dimensions $\sim r_1^{1/2}\times r_1^{1/2} \times r_1$.

Note that it suffices to consider the wave packets that intersect with $Q_{r_1}$.
Indeed, let us fix $t_0$ such that $(x_0,t_0) \in Q_{r_1}$ for some $x_0$
and denote the slice of $Q_{r_1}$ by
\[
Q_{r_1}^{(t_0)} = Q_{r_1} \cap \{ t=t_0 \}.
\]
Since $\cup Q_{r_2}' \subset Q_{r_1}$, to estimate the right-hand side of \eqref{unionPM}, 
it suffices to consider the wave packets $T, T'$ intersecting with $R^a Q_{r_1}^{(t_0)}$, which is an $R^a$-neighborhood of $Q_{r_1}^{(t_0)}$. Therefore,
\begin{align*}
&\big\|
|e^{it \widetilde \phi} \widetilde f \,\, e^{it \widetilde \phi} \widetilde g |^{\frac 12}  \big\|_{L^6(\cup Q_{r_2}')}
\lesssim
\big\| | \sum_{\substack{T \cap R^aQ_{r_1}^{(t_0)} \neq \emptyset, \\
T' \cap R^a  Q_{r_1}^{(t_0)} \neq \emptyset}}
e^{it \widetilde \phi} \widetilde f_{T} e^{it \widetilde \phi} \widetilde g_{T'} |^{\frac12}  \big\|_{L^6(\cup Q_{r_2}')}
+\mathcal E_N \nonumber
\end{align*}
where $\mathcal E_N =O(R^{-N}) \|f\|_2^{1/2} \|g\|_2^{1/2}$ for sufficiently large $N$.
Under the assumption \eqref{sort3},
we apply \eqref{B} for $R=r_1$ to the above inequality and using Plancherel's theorem, we obtain
\begin{align*}
&\big\| |e^{it \widetilde \phi} \widetilde f\,\, e^{it \widetilde \phi} \widetilde g |^{\frac12}  \big\|_{L^6(\cup Q_{r_2}')} 
\le
C_{\varepsilon}r_1^{-\frac16+\varepsilon+C\delta' }M_2^{-\frac16}
\big\| \sum_{T}
e^{it_0 \widetilde \phi}  \widetilde f_{T} \big\|_{L_x^2}^{\frac12}
\big \|\sum_{T'}
e^{it_0 \widetilde \phi} \widetilde g_{T'}\big\|_{L_x^2}^{\frac12}+\mathcal E_N
\end{align*}
where the summation is taken over $T$ and $T'$ satisfying $ T \cap R^a Q_{r_1}^{(t_0)} \neq \emptyset$ and
$T' \cap R^a Q_{r_1}^{(t_0)} \neq \emptyset$.

Note that the length $Cr_1^{1/2}$ of the short axis  of a tube $T$ is smaller than the side length $R^ar_1$ of the cube $R^aQ_{r_1}$.
Hence, the intersection of the tube $T$ and $Q_{r_1}^{(t_0)}$ is contained in $R^{2a}Q_{r_1} \cap \{ t=t_0\}$
for sufficiently large $R>0$.
Thus, we have
\begin{equation}
\begin{aligned}\label{L2est0}
	\big\| |e^{it \widetilde \phi} &\widetilde f \,\, e^{it \widetilde \phi} \widetilde g |^{\frac 12} \big \|_{L^6(\cup Q_{r_2}')}  \\
	 &\quad\le \!C_\varepsilon r_1^{-\frac16+\varepsilon+C\delta'}  M_2^{-\frac16}
	 \| e^{it_0 \widetilde \phi } \widetilde f\|_{L_{x}^2(R^{2a}Q_{r_1}^{(t_0)})}^{\frac12}
\|  e^{it_0 \widetilde \phi} \widetilde g\|_{L_{x}^2( R^{2a} Q_{r_1}^{(t_0)})}^{\frac12}
\!+\!\mathcal E_N.
\end{aligned}
\end{equation}

By taking the average with respect to $t$, we have 
\begin{align}\label{L2est1}
\| e^{it_0 \widetilde \phi } \widetilde f \|_{L_{x}^2(R^{2a} Q_{r_1}^{(t_0)})}
\le
\big( r^2R^{1+2a} \big)^{-\frac12}
\| e^{it \widetilde \phi}  \widetilde f \|_{L^2(R^{2a} Q_{r_1})}.
\end{align}
Since $\cup R^{2a}Q_{r_1} 
\subset \mathcal A_{\xi_0}^r(B_{R^{1+3a}})$,
by \eqref{sort2} and Lemma \ref{lem:scale}, we get
\begin{align}\label{L2est2}
\| e^{it\widetilde \phi} \widetilde f \|_{L^2(R^{2a}Q_{r_1})}
&\lesssim 
M_1^{-\frac12} \| e^{it\widetilde \phi} \widetilde f \|_{L^2(\mathrm A_{\xi_0}^{r}(B_{R^{1+3a}}))}
\lesssim  
M_1^{-\frac 12} r \|e^{it\phi} f\|_{L^2(B_{R^{1+3a}})}.
\end{align}
By combining \eqref{L2est1} and \eqref{L2est2} with the trivial estimate $ \|e^{it\phi} f\|_{L^2(B_{R^{1+3a}})}\lesssim R^{(1+3a)/2}\|f\|_2$ via Plancherel's theorem, we have
\[
\big\| e^{it_0 \widetilde \phi } \widetilde f \big\|_{L_{x,t}^2(R^{2a} Q_{r_1}^{(t_0)})}
\lesssim M_1^{-\frac12} R^{\frac {a}2}\|f\|_2.
\]
Since $r_1=r^2R$ (\eqref{r12}), disregarding the error term $\mathcal E_N$, \eqref{L2est0}  is bounded by
\begin{align*}
&\big\| |e^{it \widetilde \phi} \widetilde f \,\, e^{it \widetilde \phi} \widetilde g |^{\frac 12} \big \|_{L^6(\cup Q_{r_2}')} 
\le
C_{\varepsilon} 
(r^2R)^{-\frac16+\varepsilon+C\delta'} M_2^{-\frac16}
M_1^{-\frac 12} R^{\frac a2} \|f\|_2^{\frac 12} \|g\|_2^{\frac 12}.
\end{align*}
By combining this with \eqref{sort13} and \eqref{unionPM}, 
and taking sufficiently small $a=a(\varepsilon)$ for $R^{-1/2} \le r$, we obtain
\begin{align*}
\big\| |e^{it\widetilde \phi} \widetilde f e^{it\widetilde \phi} \widetilde g |^{\frac 12} \big\|_{L^6(\cup_j P_j)}
&\le C_{\varepsilon} R^{-\frac16+3\varepsilon+C\delta'}
r^{-\frac 13} 
M_{1}^{-\frac13} M_2^{-\frac16}
\| f\|_2^{\frac12} \|g\|_2^{\frac12}.
\end{align*}
Therefore, using \eqref{MM12},  the above is bounded by
$ C_{\varepsilon} R^{-\frac16+4\varepsilon+C\delta'}  r^{-\frac12} M^{-\frac16}	
M_{1}^{-\frac16} \| f\|_2^{\frac12} \|g\|_2^{\frac12}$.
Since $M_{1} \ge 1$, by taking $\varepsilon \le \epsilon/C_1$ for sufficiently large $C_1\ge1$, we obtain \eqref{A3}. This completes the proof of \eqref{BM}.
\end{proof}

We conclude this section by proving Theorem \ref{8.1}.
\subsection{Proof of Theorem \ref{8.1}.}

Let $\epsilon>0$.
Assume that $\widehat f$, $\widehat g $ are supported on $B^2(\xi_0,r)$ for some $0<r< 1$ and $\xi_0 \in \interior \mathbb A_1$. From \eqref{one}, we may assume that $r\ge R^{-1/2}$.
For a dyadic number $ \lambda$, we define a set $X_\lambda $ by
\[
X_\lambda = \{ x \in B^2(0,R) : 
\sup_{0<t<R}  |U_\alpha f(x) U_\alpha g(x)| \sim \lambda \}.
\]
It suffices to consider  $R^{-N} \le \lambda \le R^{c}$.

We cover $N_{R^{1/2+\delta'}}(Z)$ with cubes $Q_j$, $j=1,\dots,M$, of side length $R^{1/2}$.
Since $\mathcal F(U_\alpha f)$ is supported on a ball of radius $r$, we consider smaller cubes $Q'=Q'(Q_j)$
of side length $r^{-1} \le R^{1/2}$ such that $Q' \subset Q_j$.
We define $S_\lambda$ by the union of $Q'$ contained in $B_R$ such that
\[
\sup_{(x,t) \in Q'} |U_\alpha f(x) U_\alpha g(x) |\gtrsim \lambda
\]
and all projections of $Q'$ onto the $x$-plane $\mathbb R^2$ are boundedly overlapping. 
Clearly, we have
\begin{equation}\label{ulambda} 
|X_\lambda| \lesssim r |S_\lambda|.
\end{equation}
For each $Q_j$, the number of $Q'$ contained in $Q_j$ is at most $C(R^{1/2}/r^{-1})^2=CRr^2$ since the projections of $Q'$ onto $x$-space are finitely overlapping. Hence, by pigeonholing, we also have
\begin{align}\label{xlambda}
|S_\lambda| 
\lesssim (\log R) M Rr^{-1}. 
\end{align}

Now, we prove Theorem \ref{8.1}. By construction, it suffices to show that
\begin{align}\label{eq0101}
\lambda^{\frac12} |X_\lambda|^{\frac13} \le C R^{c\delta'} \|f\|_2^{\frac 12} \|g\|_2^{\frac 12}.
\end{align}
After finite decomposition, we may assume that
$\widehat f$ and $\widehat g $ are supported on a ball $B^2 (\eta_0,\epsilon_1 r)$ for some 
small $\epsilon_1$ and $\eta_0 \in \interior \mathbb A_1$ since $f$ and $g$ have Fourier supports separated by $K^{-1}r$ 
such that $K^{-1} \ll \epsilon_1$.
By Lemmas \ref{reducephi} and \ref{tan_phirho} with $\rho=\epsilon_1 >0$ and $\xi_0 = \eta_0$, 
we have $f_{\epsilon_1}=\widetilde f$, $g_{\epsilon_1}=\widetilde g$ (\eqref{tildef}), and $\phi_{\epsilon_1}=\phi_{\eta_0}^{\epsilon_1}$ for $\phi(\xi)=|\xi|^\alpha$
such that $\phi_{\epsilon_1} \in\mathcal P(\epsilon_0)$ and
 $f_{\epsilon_1}$ and $g_{\epsilon_1}$ have Fourier supports on a ball $B^2(0,r)$ separated by $\epsilon_1^{-1}K^{-1}r$ and concentrated on the wave packets from $T_Z(\epsilon_1^{-1}R^{-1/2+\delta'})$.

By pigeonholing on the size of $\| |e^{it\phi_{\epsilon_1}} f_{\epsilon_1}  e^{it\phi_{\epsilon_1}}g_{\epsilon_1} |^{1/2} \|_{L^6(Q_j)}$ , we sort the fraction $\sim 1/(\log R)$ of cubes $Q_j$ such that 
\begin{equation}\label{qh1h2}
\big\| |e^{it\phi_{\epsilon_1}} f_{\epsilon_1}  e^{it\phi_{\epsilon_1}}g_{\epsilon_1} |^{\frac 12} \big\|_{L^6( Q_j)} 
\quad \text{is essentially constant.}
\end{equation}
We denote by $\mathcal Q$ the set of such cubes $Q_j$.
Thus, to prove \eqref{eq0101}, it suffices to show that
\begin{equation}\label{lambdax}
\lambda^3 |S_\lambda|
\lesssim  R^{c_1\delta'} \big\| |e^{it \phi_{\epsilon_1}}f_{\epsilon_1} 
e^{it \phi_{\epsilon_1}}g_{\epsilon_1}|^{\frac12} \big\|_{L^6( \cup_{Q_j \in \mathcal Q}  Q_j)}^6
\end{equation}
for some constant $c_1$. Indeed, by \eqref{ulambda}, we have
$
|X_\lambda|^{\frac13}
\lesssim 
r^{\frac 13} |S_\lambda|^{\frac16}  |S_\lambda|^{\frac16}
$
and $r^{\frac 13}|S_\lambda|^{\frac 16} \le (\log R)^{\frac 16}(rMR)^{\frac 16} $ by \eqref{xlambda}. Hence, we have
\[
\lambda^{\frac 12}|X_\lambda|^{\frac 13} \le
(\log R)^{\frac 16}(rMR)^{\frac16}  (\lambda^3 |S_\lambda|)^{\frac16} .
\]
Combining this with \eqref{lambdax}, we obtain
\begin{align*}\label{eq111}
\lambda^{\frac12} |X_\lambda|^{\frac13} 
\lesssim
r^{\frac16} M^{\frac16} R^{\frac16+c\delta'} 
\big\| |e^{it \phi_{\epsilon_1}}f_{\epsilon_1} 	e^{it \phi_{\epsilon_1}}g_{\epsilon_1} |^{\frac12} \big\|_{L^6(\cup_{Q_j \in \mathcal Q} Q_j)}.
\end{align*}
By applying Proposition \ref{bilinearM} to the right-hand side, we get the desired bound \eqref{eq0101} and hence \eqref{bimax}.

It remains to show \eqref{lambdax}. Let $Q' \subset Q_j$ and let $\psi_{Q'} \in \mathcal C_0^\infty(\mathbb R^3)$ such that $\mathcal F(\psi_{Q'})$
is supported on $B^3(0,2r)$ and $\psi_{Q'}$ decreases rapidly outside $Q'$. 
Since $\widehat f$, $\widehat g$ are supported on $B^2(\xi_0,r)$, we see that
\[
\mathcal F(U_\alpha f U_\alpha g)(\xi,\xi_3)
=\mathcal F( U_\alpha f U_\alpha g)(\xi,\xi_3) \mathcal F(\psi_{Q'})(\xi,\xi_3).
\]
Since $\psi_{Q'}$ decreases rapidly outside a ball of radius $r^{-1}$,
$\int U_\alpha f U_\alpha g ((x,t)-(y,s))\psi_{Q'}(y,s)\,dyds$ is negligible when $|(y,s)| \ge R^{a}r^{-1}$ for sufficiently small $a$.
Hence, we have
\begin{equation*} 
\sup_{(x,t) \in Q'}  |U_\alpha f(x,t) U_\alpha g(x,t)|
\lesssim 
r^3 \int_{R^{a}Q'} |U_\alpha f(x,t) U_\alpha g(x,t) | \,dxdt
+E_N
\end{equation*}
where $E_N=O(R^{-N}\|f\|_2\|g\|_2)$ for sufficiently large $N$.
By applying Lemma \ref{lem:scale} with $\rho=\epsilon_1$ and $\xi_0=\eta_0$, we get
\begin{align}\label{keyreduction}
|Q'|\sup_{(x,t) \in Q'} 
|U_\alpha f(x,t) U_\alpha g(x,t)|
\lesssim 
\int_{\widetilde A_{Q'}} 
| e^{it \phi_{\epsilon_1}}f_{\epsilon_1} (x) e^{it \phi_{\epsilon_1}}g_{\epsilon_1} (x) | \,dxdt
+E_N
\end{align}
where $\widetilde A_{Q'}=\mathcal A_{\eta_0}^{\epsilon_1}(R^{a} Q')$.
By applying H\"older's inequality, we have
\begin{align*} 
|Q'|^{\frac 13}\sup_{(x,t) \in Q'} &|U_\alpha f(x,t) U_\alpha g(x,t)|  
\lesssim 
R^{ca}
\Big( \int_{ \widetilde A_{Q'}} | e^{it \phi_{\epsilon_1}}f_{\epsilon_1}(x) e^{it \phi_{\epsilon_1}}g_{\epsilon_1}(x) |^3 \,dxdt \Big)^{\frac 13}+E_N
\end{align*}
for some constant $c>1$.
If $a$ is sufficiently small, then $\widetilde A_{Q'}$ is contained in a cube of side length $R^{1/2}$;
hence, we may assume that $\widetilde A_{Q'} \subset Q_j$.
Taking the third power and summing over all $Q'$ such that $\widetilde A_{Q'} \subset \cup_j Q_j$ and using \eqref{qh1h2}, we have \eqref{lambdax} with a minor error term $E_N$. This completes the proof.
\qed

\begin{rmk}\label{disp}
Without difficulty, we can see that the result of Theorem \ref{point3} 
extends to the solution of the general dispersive equation.
Let $\alpha>1$ and $\Phi$ be a smooth function such that $ |\nabla \Phi(\xi)| \ge C^{-1} |\xi|^{\alpha-1}$ and $|\partial_\xi^\gamma \Phi(\xi)| \le C |\xi|^{\alpha-|\gamma|}$ for any $\gamma$ for some constant $C>0$.
The reduction to the bilinear estimate in Theorem \ref{8.1} (see Section \ref{sec2}) via polynomial partitioning is nearly identical.
Since $\Phi$ is positive definite, by rescaling and Lemma \ref{reducephi}, 
 we only need to consider 
$e^{it\phi}$ for $\phi \in \mathcal P(\epsilon_0)$ (see \eqref{keyreduction}).
Then, by following the proof of Theorem \ref{8.1}, we can obtain \eqref{bimax}
for $e^{it\Phi}$ in place of $U_\alpha f$.
\end{rmk}

\begin{rmk}
A modification of the proof of Theorem \ref{point3} combined with the argument in \cite{DZ} provides \eqref{convs} for $d\ge3$ whenever $s> d/(2d+2)$.
\end{rmk}

\bibliographystyle{plain}

\end{document}